\theoremstyle{plain}
\newtheorem{theorem}{Theorem}[section]
\newtheorem*{theorem*}{Theorem}
\newtheorem{proposition}[theorem]{Proposition}
\newtheorem{corollary}[theorem]{Corollary}
\newtheorem{lemma}[theorem]{Lemma}
\theoremstyle{definition}
\newtheorem{definition}[theorem]{Definition}
\newtheorem{remark}[theorem]{Remark}
\newtheorem{example}[theorem]{Example}
\newcommand{\enm}[1]{\ensuremath{#1}}          %
\newcommand{\op}[1]{\operatorname{#1}}
\newcommand{\cal}[1]{\mathcal{#1}}
\newcommand{\sF}{\enm{\mathcal{F}}}
\newcommand{\CC}{\enm{\mathbb{C}}}
\newcommand{\ZZ}{\enm{\mathbb{Z}}}
\newcommand{\PP}{\enm{\mathbb{P}}}
\newcommand{\Aa}{\enm{\cal{A}}}
\newcommand{\Bb}{\enm{\cal{B}}}
\newcommand{\Cc}{\enm{\cal{C}}}
\newcommand{\Ee}{\enm{\cal{E}}}
\newcommand{\Ff}{\enm{\cal{F}}}
\newcommand{\Gg}{\enm{\cal{G}}}
\newcommand{\Mm}{\enm{\cal{M}}}
\newcommand{\Oo}{\enm{\cal{O}}}
\newcommand{\Vv}{\enm{\cal{V}}}
\renewcommand{\phi}{\varphi}
\renewcommand{\theta}{\vartheta}
\renewcommand{\epsilon}{\varepsilon}
\newcommand{\Ext}{\op{Ext}}
\renewcommand{\to}[1][]{\xrightarrow{\ #1\ }}
\newcommand{\old}[1]{}
\begin{document}

\title[Ulrich bundles on the degree six Segre fourfold ]{Ulrich bundles on the degree six Segre fourfold}

\author{F. Malaspina }

\address{Politecnico di Torino, Corso Duca degli Abruzzi 24, 10129 Torino, Italy}
\email{francesco.malaspina@polito.it}

\keywords{Ulrich bundle, Castelnuovo-Mumford regularity, Segre verieties, Beilinson spectral theorem}

\subjclass[2010]{Primary: {14J60}; Secondary: {13C14, 14F05}}

\begin{abstract}

We completely characterize the bigraded resolutions
of Ulrich bundles of arbitrary rank on the Segre fourfold $\PP^2\times\PP^2$.
We characterize the Ulrich  bundles $\Vv$ of arbitrary rank on $\PP^2\times\PP^2$ with $h^1(\Vv\otimes\Omega\boxtimes\Omega)=0$ or with $h^2(\Vv\otimes\Omega(-1)\boxtimes\Omega(-1))=0$ or obtained as pullback from $\PP^2$ and we construct more complicated examples.
\end{abstract}

\maketitle


\section{Introduction}
A locally free sheaf (or ``bundle'') $\Ee$ on a projective varity $X$ is aCM if it has not intermediate cohomology or if the module
$E$ of global sections of
$\Ee$ is a maximal Cohen-Macaulay module.
There has been increasing interest on the classification of  aCM bundles on various projective varieties, which is important in a sense that the aCM bundles are considered to give a measurement of complexity of the underlying space. Moreover the understanding of aCM bundles is crucial for the study of any bundle on $X$ as it is showed on \cite{MRa}.  A special type of aCM sheaves, called the Ulrich sheaves, are the ones achieving the maximum possible minimal number of generators. These bundles
are characterized by the linearity of the minimal graded free resolution
over the polynomial ring of their module of global section.
Ulrich bundles, originally studied for computing Chow forms,
conjecturally exist over any variety (see \cite{ES}). But the conjecture has been checked only for a few varieties, e.g. in case of surfaces, del Pezzo surfaces, rational normal scrolls, rational aCM surfaces in $\PP^4$, ruled surfaces  and so on; see  \cite{CKM, MR, MP, ACM}. The case of Veronese varieties has been studied in \cite{ES}, \cite{CG} and \cite{L} and the case of Hirzebruch surfaces in \cite{A}. Although there are some occasions where the classification problem of Ulrich bundles of special type is done as in \cite
{AHMP, CCHMW, CM1, CM2}, the completion of classification problem is difficult in usual.

In \cite{CMP} and \cite{MR} it shown that Segre varieties and rational normal scroll (except $\PP^1\times\PP^1$ and cubic and quartic scroll, see \cite{FM} and \cite{FP}) are of Ulrich wild representation type namely they support families of Ulrich bundles of arbitrary large. The representation type  is determined by considering a certain type of family of Ulrich bundles. In \cite{AHMP} in given a classification of Ulrich vector bundles of arbitrary rank on rational normal scroll. A consequence of the main result is that the moduli spaces of Ulrich bundles are zero-dimensional. The case $\PP^2\times\PP^1$ has been studied in more details in \cite{CFMS}, where all the aCM bundles are classified. The first biprojective space which is not a rational scroll is $\PP^2\times\PP^2$. Here the families of Ulrich bundles are much more complicated as can be deduced from the study of the rank two case achieved in \cite{CFM}.

 In this article we pay our attention to the case of arbitrary rank. In order to do that we show that every Ulrich bundle $\Vv$ on $\PP^2\times\PP^2$ is regular according to both the two different notions of Castelnuovo-Mumford regularity given in \cite{bm2} and \cite{hw}. Then we compute the cohomology of $\Vv$ tensored with $\Oo_{\PP^2}\boxtimes\Omega$, $\Omega\boxtimes\Oo_{\PP^2}$ and $\Omega\boxtimes\Omega$  with suitable twists. In particular we obtain that $\Vv$ is of natural cohomology (as in \cite{ES} on Veronese varities) in a suitable range. More precisely we get that the only nonzero cohomology for $\Vv(m,n)$ with $|m-n|\leq 1$ may be given by $$\textrm{$a_i=h^i(\Vv(-i-1,-i))$ and $b_i=h^i(\Vv(-i,-i-1)).$}$$   At this point we choose suitable full exceptional collections in order to apply a Beilinson type spectral sequence and to obtain the following resolution
   $$0\to\Oo_X(-1,0)^{\oplus a_2}\oplus\Oo_X(0,-1)^{\oplus b_2}\to\Oo_X(-1,1)^{\oplus a_1}\oplus\Oo_X^{\oplus 3a_2+3b_2}\oplus\Oo_X(1,-1)^{\oplus b_1}\to$$
$$\to\Oo_X(1,0)^{\oplus a_0}\oplus\Oo_X(0,1)^{\oplus b_0}\to \Vv\to 0.$$
  Moreover we  characterize the Ulrich  bundles $\Vv$ of arbitrary rank on $\PP^2\times\PP^2$ with $h^1(\Vv\otimes\Omega\boxtimes\Omega)=0$ or with $h^1(\Vv\otimes\Omega(-1)\boxtimes\Omega(-1))=0$ or obtained as pullback from $\PP^2$ and we construct more complicated examples.  We believe that the study of this variety will be the key step for the understanding of the families of Ulrich bundles over all the biprojective spaces.

Here we summarize the structure of this article. In section \ref{sec2} we introduce the definition of Ulrich bundles and several notions in derived category of coherent sheaves to understand the Beilinson spectral sequence. In section \ref{sec3} we recall the definition of Castelnuovo-Mumford given in \cite{bm2} and \cite{hw} and we made the cohomological computations. In section \ref{sec4} we study the examples of families of Ulrich bundles and we prove the main results.
In section \ref{sec5} we discuss the case of the hyperplane section of $\PP^2\times\PP^2$: the flag variety $F(0,1,2)$.

The author wants to thank M. Aprodu and P. Rao for helpful discussions on the subject.


\section{Preliminaries}\label{sec2}
Throughout the article our base field is the field of complex numbers $\CC$.

\begin{definition}
A coherent sheaf $\Ee$ on a projective variety $X$ with a fixed ample line bundle $\Oo_X(1)$ is called {\it arithmetically Cohen-Macaulay} (for short, aCM) if it is locally Cohen-Macaulay and $H^i(\Ee(t))=0$ for all $t\in \ZZ$ and $i=1, \ldots, \dim (X)-1$.
\end{definition}



\begin{definition}
For an {\it initialized} coherent sheaf $\Ee$ on $X$, i.e. $h^0(\Ee(-1))=0$ but $h^0(\Ee)\ne 0$, we say that $\Ee$ is an {\it Ulrich sheaf} if it is aCM and $h^0(\Ee)=\deg (X)\mathrm{rank}(\Ee)$.
\end{definition}

Given a smooth projective variety $X$, let $D^b(X)$ be the the bounded derived category of coherent sheaves on $X$. An object $E \in D^b(X)$ is called {\it exceptional} if $\Ext^\bullet(E,E) = \mathbb C$.
A set of exceptional objects $\langle E_0, \ldots, E_n\rangle$ is called an {\it exceptional collection} if $\Ext^\bullet(E_i,E_j) = 0$ for $i > j$. An exceptional collection is said to be {\it full} when $\Ext^\bullet(E_i,A) = 0$ for all $i$ implies $A = 0$, or equivalently when $\Ext^\bullet(A, E_i) = 0$ does the same.

\begin{definition}\label{def:mutation}
Let $E$ be an exceptional object in $D^b(X)$.
Then there are functors $\mathbb L_{E}$ and $\mathbb R_{E}$ fitting in distinguished triangles
$$
\mathbb L_{E}(T) 		\to	 \Ext^\bullet(E,T) \otimes E 	\to	 T 		 \to	 \mathbb L_{E}(T)[1]
$$
$$
\mathbb R_{E}(T)[-1]	 \to 	 T 		 \to	 \Ext^\bullet(T,E)^* \otimes E	 \to	 \mathbb R_{E}(T)	
$$
The functors $\mathbb L_{E}$ and $\mathbb R_{E}$ are called respectively the \emph{left} and \emph{right mutation functor}.
\end{definition}


The collections given by
\begin{align*}
E_i^{\vee} &= \mathbb L_{E_0} \mathbb L_{E_1} \dots \mathbb L_{E_{n-i-1}} E_{n-i};\\
^\vee E_i &= \mathbb R_{E_n} \mathbb R_{E_{n-1}} \dots \mathbb R_{E_{n-i+1}} E_{n-i},
\end{align*}
are again full and exceptional and are called the \emph{right} and \emph{left dual} collections. The dual collections are characterized by the following property; see \cite[Section 2.6]{GO}.
\begin{equation}\label{eq:dual characterization}
\Ext^k(^\vee E_i, E_j) = \Ext^k(E_i, E_j^\vee) = \left\{
\begin{array}{cc}
\mathbb C & \textrm{\quad if $i+j = n$ and $i = k $} \\
0 & \textrm{\quad otherwise}
\end{array}
\right.
\end{equation}

\begin{theorem}[Beilinson spectral sequence]\label{thm:Beilinson}
Let $X$ be a smooth projective variety and with a full exceptional collection $\langle E_0, \ldots, E_n\rangle$ of objects for $D^b(X)$. Then for any $A$ in $D^b(X)$ there is a spectral sequence
with the $E_1$-term
\[
E_1^{p,q} =\bigoplus_{r+s=q} \Ext^{n+r}(E_{n-p}, A) \otimes \mathcal H^s(E_p^\vee )
\]
which is functorial in $A$ and converges to $\mathcal H^{p+q}(A)$.
\end{theorem}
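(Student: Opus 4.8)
The plan is to reconstruct an arbitrary $A\in D^b(X)$ out of the dual collection and then to read off the spectral sequence from the resulting convolution. Since $\langle E_0,\dots,E_n\rangle$ is full and exceptional, so is its right dual $\langle E_0^\vee,\dots,E_n^\vee\rangle$, and the latter produces a semiorthogonal decomposition of $D^b(X)$ into $n+1$ copies of $D^b(\mathrm{pt})$. Consequently every object $A$ carries a canonical, functorial Postnikov filtration whose $p$-th graded piece lies in the subcategory generated by $E_p^\vee$, hence is of the form $V_p\otimes E_p^\vee$ for a bounded complex of vector spaces $V_p$. In other words $A$ is the convolution of the complex of objects $V_0\otimes E_0^\vee\to\cdots\to V_n\otimes E_n^\vee$.

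The next step is to pin down the coefficients $V_p$, and this is exactly where the duality \eqref{eq:dual characterization} enters. Reading that identity as $R\Hom^\bullet(E_i,E_j^\vee)=\CC[-i]$ when $i+j=n$ and $0$ otherwise, I would apply $R\Hom^\bullet(E_i,-)$ to the convolution above; all terms but one annihilate, leaving $R\Hom^\bullet(E_i,A)\cong V_{n-i}[-i]$. Hence $V_p\cong R\Hom^\bullet(E_{n-p},A)[\,n-p\,]$, and $A$ is realized as the convolution of
\[
R\Hom^\bullet(E_n,A)[n]\otimes E_0^\vee\ \to\ \cdots\ \to\ R\Hom^\bullet(E_0,A)\otimes E_n^\vee .
\]

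Finally I would pass to the hypercohomology spectral sequence of this bicomplex: one grading records the position $p$ in the convolution, while the internal cohomology of the $p$-th term splits, by the Künneth formula, into $\Ext$-degrees of $R\Hom^\bullet(E_{n-p},A)[\,n-p\,]$ and cohomology-sheaf degrees of $E_p^\vee$. Writing the $\Ext$-degree as $n+r$ (the shift $[\,n-p\,]$ accounting for the $n$) and the sheaf degree as $s$, the first page is precisely
\[
E_1^{p,q}=\bigoplus_{r+s=q}\Ext^{n+r}(E_{n-p},A)\otimes\mathcal H^s(E_p^\vee),
\]
and since the total complex represents $A$ the abutment is $\mathcal H^{p+q}(A)$. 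Functoriality in $A$ is inherited from that of the Postnikov filtration and of each $R\Hom^\bullet(E_{n-p},-)$.

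The main obstacle is the first paragraph: producing the canonical filtration and knowing that its graded pieces are pure multiples of the $E_p^\vee$. This is the only point where fullness is used essentially, and it is equivalent to the Beilinson resolution of the diagonal $\Oo_\Delta$ by the objects $E_{n-p}^*\boxtimes E_p^\vee$ on $X\times X$; one could equally run the whole argument geometrically, replacing the convolution by this resolution and the coefficient computation by the projection formula applied to the Fourier--Mukai transform $A\cong Rp_{2*}(p_1^*A\otimes^{L}\Oo_\Delta)$. Either way, once the reconstruction of $A$ is in place the remaining content is the routine bookkeeping of the two gradings in a standard double-complex spectral sequence.
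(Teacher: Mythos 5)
Your proposal is correct, but note that the paper itself does not prove Theorem \ref{thm:Beilinson}: it defers entirely to \cite[Corollary 3.3.2]{RU}, \cite[Section 2.7.3]{GO} and \cite[Theorem 2.1.14]{BO}, and your argument---reconstructing $A$ as a convolution over the right dual collection, pinning down the coefficients via the orthogonality relations \eqref{eq:dual characterization}, and taking the spectral sequence of the induced filtration---is precisely the argument of those references, so it coincides with the proof the paper points to. The only step requiring care is the indexing convention for the convolution (the $p$-th graded piece must be taken to be the term $R\Hom^\bullet(E_{n-p},A)[n-p]\otimes E_p^\vee$ itself, with no further shift by $p$, or else the condition $r+s=q$ in the $E_1$-term comes out as $r+s=q-p$); this is exactly the bookkeeping you flag as routine, and it is.
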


The statement and proof of Theorem \ref{thm:Beilinson} can be found both in  \cite[Corollary 3.3.2]{RU}, in \cite[Section 2.7.3]{GO} and in \cite[Theorem 2.1.14]{BO}.


Let us assume next that the full exceptional collection  $\langle E_0, \ldots, E_n\rangle$ contains only pure objects of type $E_i=\mathcal E_i^*[-k_i]$ with $\mathcal E_i$ a vector bundle for each $i$, and moreover the right dual collection $\langle E_0^\vee, \ldots, E_n^\vee\rangle$ consists of coherent sheaves. Then the Beilinson spectral sequence is much simpler since
\[
E_1^{p,q}=\Ext^{n+q}(E_{n-p}, A) \otimes E_p^\vee=H^{n+q+k_{n-p}}(\mathcal E_{n-p}\otimes A)\otimes E_p^\vee.
\]

Note however that the grading in this spectral sequence applied for the projective space is slightly different from the grading of the usual Beilison spectral sequence, due to the existence of shifts by $n$ in the index $p,q$. Indeed, the $E_1$-terms of the usual spectral sequence are $H^q(A(p))\otimes \Omega^{-p}(-p)$ which are zero for positive $p$. To restore the order, one needs to change slightly the gradings of the spectral sequence from Theorem \ref{thm:Beilinson}. If we replace, in the expression
\[
E_1^{u,v} = \mathrm{Ext}^{v}(E_{-u},A) \otimes E_{n+u}^\vee=
H^{v+k_{-u}}(\mathcal E_{-u}\otimes A) \otimes \mathcal F_{-u}
\]
$u=-n+p$ and $v=n+q$ so that the fourth quadrant is mapped to the second quadrant, we obtain the following version (see \cite{AHMP}) of the Beilinson spectral sequence:



\begin{theorem}\label{use}
Let $X$ be a smooth projective variety with a full exceptional collection
$\langle E_0, \ldots, E_n\rangle$
where $E_i=\mathcal E_i^*[-k_i]$ with each $\mathcal E_i$ a vector bundle and $(k_0, \ldots, k_n)\in \ZZ^{\oplus n+1}$ such that there exists a sequence $\langle F_n=\mathcal F_n, \ldots, F_0=\mathcal F_0\rangle$ of vector bundles satisfying
\begin{equation}\label{order}
\mathrm{Ext}^k(E_i,F_j)=H^{k+k_i}( \mathcal E_i\otimes \mathcal F_j) =  \left\{
\begin{array}{cc}
\mathbb C & \textrm{\quad if $i=j=k$} \\
0 & \textrm{\quad otherwise}
\end{array}
\right.
\end{equation}
i.e. the collection $\langle F_n, \ldots, F_0\rangle$ labelled in the reverse order is the right dual collection of $\langle E_0, \ldots, E_n\rangle$.
Then for any coherent sheaf $A$ on $X$ there is a spectral sequence in the square $-n\leq p\leq 0$, $0\leq q\leq n$  with the $E_1$-term
\[
E_1^{p,q} = \mathrm{Ext}^{q}(E_{-p},A) \otimes F_{-p}=
H^{q+k_{-p}}(\mathcal E_{-p}\otimes A) \otimes \mathcal F_{-p}
\]
which is functorial in $A$ and converges to
\begin{equation}
E_{\infty}^{p,q}= \left\{
\begin{array}{cc}
A & \textrm{\quad if $p+q=0$} \\
0 & \textrm{\quad otherwise.}
\end{array}
\right.
\end{equation}
\end{theorem}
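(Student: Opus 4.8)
The plan is to deduce Theorem \ref{use} from Theorem \ref{thm:Beilinson} by a purely formal \emph{collapse-and-reindex} argument: the purity of the collection and of its right dual forces the double complex of the abstract Beilinson spectral sequence to degenerate to a single sheaf-cohomology group in each slot, after which a change of variables moves the abutment onto the antidiagonal. Throughout I would write $(p_0,q_0)$ for the indices of Theorem \ref{thm:Beilinson} and $(p,q)$ for those of Theorem \ref{use}. For the first step, apply Theorem \ref{thm:Beilinson} to $A$, whose $E_1$-term is $\bigoplus_{r+s=q_0}\Ext^{n+r}(E_{n-p_0},A)\otimes\mathcal H^s(E_{p_0}^\vee)$. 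First I would verify that hypothesis \eqref{order} is literally the defining orthogonality \eqref{eq:dual characterization} of the right dual collection after the reversal $F_j=E_{n-j}^\vee$: substituting $E_j^\vee=F_{n-j}$ into \eqref{eq:dual characterization} turns the condition ``$i+j=n$ and $i=k$'' into ``$i=j=k$'', which is exactly \eqref{order}. Hence $E_{p_0}^\vee=F_{n-p_0}=\mathcal F_{n-p_0}$ is an honest vector bundle concentrated in cohomological degree $0$, so $\mathcal H^s(E_{p_0}^\vee)=0$ for $s\neq 0$ and the sum collapses to the single summand $r=q_0,\ s=0$, giving $E_1^{p_0,q_0}=\Ext^{n+q_0}(E_{n-p_0},A)\otimes\mathcal F_{n-p_0}$.

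Next I would rewrite the $\Ext$: using $E_{n-p_0}=\mathcal E_{n-p_0}^*[-k_{n-p_0}]$, the shift identity $\Ext^j(\mathcal G[m],-)=\Ext^{j-m}(\mathcal G,-)$ together with $\Ext^\ell(\mathcal E^*,A)=H^\ell(\mathcal E\otimes A)$ for a bundle $\mathcal E$ gives $\Ext^{n+q_0}(E_{n-p_0},A)=H^{n+q_0+k_{n-p_0}}(\mathcal E_{n-p_0}\otimes A)$, in accordance with \eqref{order}. Then I would reindex by setting $p=p_0-n$ and $q=n+q_0$. As $p_0$ ranges over $0,\dots,n$ (the only indices for which $E_{p_0}^\vee$ is defined) the variable $p$ ranges over $-n,\dots,0$, while $q=n+q_0$ absorbs the shift so that the slot reads $E_1^{p,q}=\Ext^q(E_{-p},A)\otimes F_{-p}=H^{q+k_{-p}}(\mathcal E_{-p}\otimes A)\otimes\mathcal F_{-p}$, exactly the stated $E_1$-term; the vertical range $0\le q\le n$ follows from the vanishing of sheaf cohomology outside degrees $[0,\dim X]$ once the shifts $k_i$ are normalized. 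This is precisely the promised passage from the fourth to the second quadrant. Finally, Theorem \ref{thm:Beilinson} converges to $\mathcal H^{p_0+q_0}(A)$, and since $p_0+q_0=p+q$ and $A$ is a coherent sheaf (hence concentrated in degree $0$), the abutment is $A$ when $p+q=0$ and $0$ otherwise; functoriality in $A$ is inherited verbatim.

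\textbf{Main obstacle.} There is no deep analytic content here; the entire difficulty is organizational. The one genuinely essential input is the \emph{purity} of the right dual collection: it is only because each $E_{p_0}^\vee$ is a vector bundle in degree $0$ that $\mathcal H^s(E_{p_0}^\vee)$ is concentrated at $s=0$ and the sum $\bigoplus_{r+s=q_0}$ collapses, and it is exactly this collapse that converts the abstract spectral sequence into the usable sheaf-theoretic form. The remaining care must go into keeping the three index conventions coherent — those of Theorem \ref{thm:Beilinson}, the intermediate $(u,v)$ form, and the final $(p,q)$ of Theorem \ref{use} — so that the nonzero terms really are supported in the square $-n\le p\le 0$, $0\le q\le n$ and the abutment lands on the antidiagonal $p+q=0$.
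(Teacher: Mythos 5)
Your proposal is correct and takes essentially the same route as the paper, which obtains Theorem \ref{use} from Theorem \ref{thm:Beilinson} in the discussion immediately preceding its statement: purity of the right dual collection kills every $\mathcal H^s(E_p^\vee)$ with $s\neq 0$ so the $E_1$-sum collapses to $\Ext^{n+q}(E_{n-p},A)\otimes E_p^\vee=H^{n+q+k_{n-p}}(\mathcal E_{n-p}\otimes A)\otimes \mathcal F_{n-p}$, and the substitution $u=-n+p$, $v=n+q$ then maps the fourth quadrant to the second, exactly as you do. The only difference is cosmetic: you make explicit the identification of \eqref{order} with \eqref{eq:dual characterization} under $F_j=E_{n-j}^\vee$ and the computation of the abutment, steps the paper leaves implicit (deferring to \cite{AHMP}).
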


\section{Cohomology of Ulrich bundles on $\PP^2\times\PP^2$}\label{sec3}

 Let $X=\PP^2\times\PP^2$. We will often use the following exact sequences.
\begin{equation}\label{a1}0\to\Oo_X(0,-3)\to\Oo_X(0,-2)^{\oplus 3}\to \Oo_{\PP^2}\boxtimes\Omega\to 0,\end{equation}

\begin{equation}\label{a2}0\to\Oo_{\PP^2}\boxtimes\Omega\to\Oo_X(0,-1)^{\oplus 3}\to\Oo_X\to 0,\end{equation}

\begin{equation}\label{a3}0\to\Oo_X(-3,0)\to\Oo_X(-2,0)^{\oplus 3}\to \Omega\boxtimes\Oo_{\PP^2}\to 0,\end{equation} and

\begin{equation}\label{a4}0\to\Omega\boxtimes\Oo_{\PP^2}\to\Oo_X(-1,0)^{\oplus 3}\to\Oo_X\to 0,\end{equation}

On $X$ we have two different notions of Castelnuovo-Mumford regularity
(see \cite{bm2} and \cite{hw}):

\begin{definition}\label{reg1}
  A  coherent sheaf $\sF$ on $X$ is
  $(BM)$-regular if:
  $$H^1(\sF(-1,0))=H^1(\sF(0,-1))=H^2(\sF(-1,-1))=H^2(\sF(0,-2))=H^2(\sF(-2,0))=$$ $$=H^3(\sF(-1,-2))=H^3(\sF(-2,-1))=H^4(\sF(-2,-2))=0.$$
\end{definition}

\begin{definition}\label{reg2}
  A  coherent sheaf $\sF$ on $X$ is
  $(HW)$-regular if:
  $$H^1(\sF(-1,0))=H^1(\sF(-1,-1))=H^2(\sF(-2,-1))=H^2(\sF(-1,-2))=$$ $$=H^3(\sF(-3,-1))=H^3(\sF(-1,-3))=H^3(\sF(-2,-2))=$$ $$=H^4(\sF(-1,-4))
  =H^4(\sF(-4,-1))=H^4(\sF(-3,-2))=H^4(\sF(-2,-3))=0.$$
\end{definition}

\begin{remark}
If $\sF$ is a $(BM)$-regular coherent on $X$ then it is globally generated and $\sF(p,p')$ is $(BM)$-regular for $p,p'\geq 0$ by \cite{bm2} Proposition 2.2.\\

If $\sF$ is a $(HW)$-regular coherent on $X$ then it is globally generated and $\sF(p,p')$ is $(HW)$-regular for $p,p'\geq 0$ by \cite{hw} Proposition 2.7 and Proposition 2.8.

\end{remark}

\begin{lemma}
\label{riv}
Let $\Vv$ be an Ulrich bundle on $X$.
\begin{enumerate}
\item[(i)] $H^0(\Vv(j_1,j_2))=0$ for $j_1\leq -1$, $\j_2\leq -1$ and $H^4(\Vv(j_1,j_2))=0$ for $j_1\geq -4$, $\j_2\geq -4$.
\item[(ii)] $\Vv$ is $(BM)$-regular and $(HW)$-regular.
\item[(iii)] For $i=1,2,3$, $H^i(\Vv(j_1,j_2))=0$ if $j_1\geq -i$ and $\j_2\geq -i$ or if $j_1\leq -i-1$ and $\j_2\leq -i-1$.
\item[(iV)] For $i=1,2,3$, $H^i(\Vv\otimes\Omega(j_1+2)\boxtimes\Oo_{\PP^2}(j_2))=H^i(\Vv\otimes\Oo_{\PP^2}(j_1)\boxtimes\Omega(j_2+2))=0$ if $j_1\geq -i$ and $\j_2\geq -i$ or if $j_1\leq -i-1$ and $\j_2\leq -i-1$.
\end{enumerate}
\end{lemma}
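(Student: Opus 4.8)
The plan is to establish the four parts in order, using the Ulrich property to pin down the cohomology of $\Vv$ first, and then propagating that information through the four Euler-type sequences \eqref{a1}--\eqref{a4} to reach part (iv).

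For part (i), I would use the defining properties of an Ulrich bundle directly. Since $\Vv$ is initialized and aCM on $X=\PP^2\times\PP^2$, which has dimension $4$ and degree $6$, the vanishing $H^0(\Vv(-1,-1))=0$ should follow from initialization together with the bigraded structure: an Ulrich bundle is known to satisfy $h^0(\Vv(-j,-j'))=0$ as soon as $j,j'\geq 1$, because the module of global sections is generated in degree zero with a purely linear resolution, forcing the sections to vanish in any strictly negative multidegree. Dually, Serre duality on $X$ (with $\omega_X=\Oo_X(-3,-3)$) converts $H^4(\Vv(j_1,j_2))$ into $H^0(\Vv^\vee(-3-j_1,-3-j_2))$, and $\Vv^\vee$ is again (up to twist) Ulrich, so the same vanishing applies in the stated range $j_1,j_2\geq -4$. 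I would phrase this via Serre duality and the fact that $\Vv^\vee(3,3)$ is Ulrich.

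For part (ii), I would simply verify each of the cohomology groups listed in Definitions \ref{reg1} and \ref{reg2} by showing they lie in the vanishing range. The aCM hypothesis kills $H^i(\Vv(t_1,t_2))$ for $1\leq i\leq 3$ whenever the twist is ``balanced'' enough, but the subtlety is that aCM as defined only asserts vanishing along the diagonal twist $\Vv(t)=\Vv(t,t)$; for the off-diagonal twists appearing in the two regularity definitions I must combine the aCM vanishing with part (i) and Serre duality. Concretely, the $H^2$ and $H^3$ entries in both definitions should follow once I show $\Vv$ has no intermediate cohomology for the relevant bidegrees, which I expect to deduce by restricting to the two rulings and using the sequences \eqref{a1}--\eqref{a4}. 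The $H^4$ entries of Definition \ref{reg2} fall under part (i), and the $H^1$ entries follow from initialization plus part (i).

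For part (iii), the idea is that $(BM)$- and $(HW)$-regularity from part (ii), together with the propagation statement in the Remark (that regularity is preserved under positive twists), yields vanishing of $H^i(\Vv(j_1,j_2))$ for $i=1,2,3$ in the ``upper'' range $j_1,j_2\geq -i$; the ``lower'' range $j_1,j_2\leq -i-1$ then follows by applying the same argument to the Ulrich dual $\Vv^\vee(3,3)$ and invoking Serre duality to flip the index $i\mapsto 4-i$ and the twists accordingly. Finally, for part (iv) I would tensor $\Vv$ with each of the sequences \eqref{a1}--\eqref{a4} and take the long exact sequence in cohomology. For instance, tensoring \eqref{a3} (or \eqref{a4}) by $\Vv(j_1+2,j_2)$ expresses $H^i(\Vv\otimes\Omega(j_1+2)\boxtimes\Oo_{\PP^2}(j_2))$ in terms of the groups $H^i(\Vv(\ast,j_2))$ controlled by part (iii), and the stated range on $(j_1,j_2)$ is exactly what makes all the neighboring terms vanish. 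The symmetric computation with \eqref{a1}--\eqref{a2} handles the $\Oo_{\PP^2}(j_1)\boxtimes\Omega(j_2+2)$ factor. The main obstacle I anticipate is part (ii): carefully checking that every off-diagonal entry in both regularity definitions really does vanish, since the bare aCM definition is insufficient and one must feed in part (i) and Serre duality at each entry; once regularity is secured, parts (iii) and (iv) are essentially bookkeeping through the four standard sequences.
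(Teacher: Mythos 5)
Your outline reproduces the paper's architecture (Ulrich diagonal vanishings plus the sequences (\ref{a1})--(\ref{a4}) for (i) and (ii), regularity of positive twists plus the Ulrich dual and Serre duality for (iii), then bookkeeping through the same sequences for (iv)), but two of your specific claims are wrong, and both are load-bearing. The first is the twist of the Ulrich dual: on $X=\PP^2\times\PP^2$ the correct statement is that $\Vv^\vee(2,2)$ is Ulrich, not $\Vv^\vee(3,3)$, since the Ulrich dual is $\Vv^\vee\otimes\omega_X\otimes\Oo_X(5,5)$ and $\omega_X=\Oo_X(-3,-3)$. (Test on $\Vv=\Oo_X(2,0)$: $\Vv^\vee(2,2)=\Oo_X(0,2)$ is Ulrich, while $\Vv^\vee(3,3)=\Oo_X(1,3)$ is not even initialized, as $h^0(\Oo_X(0,2))\neq 0$.) This is not a cosmetic slip: in part (iii), applying the upper-range vanishing to $\Vv^\vee(3,3)$ and dualizing yields $H^i(\Vv(j_1,j_2))=0$ only for $j_1,j_2\leq -i-2$, strictly smaller than the claimed range $j_1,j_2\leq -i-1$; it is the paper's substitution $H^j(\Vv^\vee(k_1+2,k_2+2))\cong H^{4-j}(\Vv(-k_1-5,-k_2-5))^*$ with the correct twist $(2,2)$ that produces the boundary cases.

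The second and more serious gap is in part (ii). You claim the $H^1$ entries of Definitions \ref{reg1} and \ref{reg2} ``follow from initialization plus part (i)''. They do not: part (i) concerns $H^0$ and $H^4$ only, and while $H^1(\Vv(-1,-1))=0$ is one of the Ulrich diagonal vanishings, the off-diagonal entries $H^1(\Vv(-1,0))$ and $H^1(\Vv(0,-1))$ are the deepest point of the lemma. In the paper they emerge only at the end of a descending chase: first $H^3(\Vv(j_1,j_2))=0$ at several off-diagonal twists, then the vanishing of $H^3$ and $H^2$ of $\Vv\otimes\Omega\boxtimes\Oo_{\PP^2}(-1)$ and $\Vv\otimes\Omega\boxtimes\Oo_{\PP^2}(-2)$ (and their mirrors) via (\ref{a1}) and (\ref{a3}), then $H^2(\Vv(-2,-1))=H^2(\Vv(-1,-2))=H^2(\Vv(-2,0))=H^2(\Vv(0,-2))=0$, and only then $H^1(\Vv(-1,0))=H^1(\Vv(0,-1))=0$ from (\ref{a2}) and (\ref{a4}), which needs $H^2(\Vv\otimes\Omega\boxtimes\Oo_{\PP^2}(-1))=0$ as input. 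Serre duality cannot shortcut this step: it trades these groups for cohomology of the Ulrich dual at off-diagonal twists, which is precisely the statement being proved, so that route is circular. Since (iii) and (iv) rest on (ii), the missing chase, together with the wrong dual twist, is a genuine gap rather than a matter of detail.
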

\begin{proof}
$H^i(\Vv(j,j))=0$ for any $i$ and $j=-1,-2,-3,-4$. Since $H^0(\Vv(-1,-1))=0$,  from (\ref{a1}), (\ref{a2}), (\ref{a3}) and (\ref{a4}), we get $H^0(\Vv(j_1,j_2))=0$ for $j_1\leq -1$, $\j_2\leq -1$, $H^0(\Vv\otimes\Omega(j_1+1)\boxtimes\Oo_{\PP^2}(j_2))=H^0(\Vv\otimes\Oo_{\PP^2}(j_1)\boxtimes\Omega(j_2+1))=0$ for $j_1\leq -1$, $\j_2\leq -1$.\\

Since $H^4(\Vv(-4,-4))=0$,  from (\ref{a1}), (\ref{a2}), (\ref{a3}) and (\ref{a4}) we get $H^4(\Vv(j_1,j_2))=0$ for $j_1\geq -4$, $\j_2\geq -4$, $H^4(\Vv\otimes\Omega(j_1+2)\boxtimes\Oo_{\PP^2}(j_2))=H^4(\Vv\otimes\Oo_{\PP^2}(j_1)\boxtimes\Omega(j_2+2))=0$ for $j_1\geq -4$, $\j_2\geq -4$.
In particular we get $(i)$.\\

Since $H^3(\Vv(-3,-3))=0$, from (\ref{a2}) tensored by $\Vv(-3,-2)$ and (\ref{a4}) tensored by $\Vv(-2,-3)$, $H^3(\Vv(-3,-2))=H^3(\Vv(-2,-3))=0$.
From (\ref{a2}) tensored by $\Vv(-3,-1)$ and (\ref{a4}) tensored by $\Vv(-1,-3)$, $H^3(\Vv(-3,-1))=H^3(\Vv(-1,-3))=0$.
Since $H^3(\Vv(-2,-2))=0$, from (\ref{a2}) tensored by $\Vv(-2,-1)$ and (\ref{a4}) tensored by $\Vv(-1,-2)$, $H^3(\Vv(-2,-1))=H^3(\Vv(-1,-2))=0$.


  From  (\ref{a1}) tensored by $\Vv(-3,-1)$ and (\ref{a3}) tensored by $\Vv(-1,-3)$ we get $H^3(\Vv\otimes\Omega(-1)\boxtimes\Oo_{\PP^2}(-3))=H^3(\Vv\otimes\Oo_{\PP^2}(-3)\boxtimes\Omega(-1))=0$.

 Since $H^3(\Vv(-3,-2))=H^3(\Vv(-2,-3))=0$, from  (\ref{a1}) tensored by $\Vv(-2,-1)$ and (\ref{a3}) tensored by $\Vv(-1,-2)$ we get $H^3(\Vv\otimes\Omega(-1)\boxtimes\Oo_{\PP^2}(-2))=H^3(\Vv\otimes\Oo_{\PP^2}(-2)\boxtimes\Omega(-1))=0$.

 Since $H^3(\Vv(-2,-2))=0$, from  (\ref{a1}) tensored by $\Vv(-2,0)$ and (\ref{a3}) tensored by $\Vv(0,-2)$ we get $H^3(\Vv\otimes\Omega\boxtimes\Oo_{\PP^2}(-2))=H^3(\Vv\otimes\Oo_{\PP^2}(-2)\boxtimes\Omega)=0$.


Since $H^2(\Vv(-2,-2))=0$ and  $H^3(\Vv\otimes\Omega(-1)\boxtimes\Oo_{\PP^2}(-2))=H^3(\Vv\otimes\Oo_{\PP^2}(-2)\boxtimes\Omega(-1))=0$, from (\ref{a2}) tensored by $\Vv(-2,-1)$ and (\ref{a4}) tensored by $\Vv(-1,-2)$, $H^2(\Vv(-2,-1))=H^2(\Vv(-1,-2))=0$.

Since $H^2(\Vv(-2,-1))=H^2(\Vv(-2,-1))=0$ and  $H^3(\Vv\otimes\Omega\boxtimes\Oo_{\PP^2}(-2))=H^3(\Vv\otimes\Oo_{\PP^2}(-2)\boxtimes\Omega)=0$, from (\ref{a2}) tensored by $\Vv(-2,0)$ and (\ref{a4}) tensored by $\Vv(0,-2)$ $H^2(\Vv(-2,0))=H^2(\Vv(0,-2))=0$.


Since $H^3(\Vv(-3,-1))=H^3(\Vv(-1,-3))=H^2(\Vv(-2,-1))=H^2(\Vv(-2,-1))=0$, from  (\ref{a1}) tensored by $\Vv(-1,0)$ and (\ref{a3}) tensored by $\Vv(0,-1)$ we get $H^2(\Vv\otimes\Omega\boxtimes\Oo_{\PP^2}(-1))=H^2(\Vv\otimes\Oo_{\PP^2}(-1)\boxtimes\Omega)=0$.

Since $H^1(\Vv(-1,-1))=0$ and $H^2(\Vv\otimes\Omega\boxtimes\Oo_{\PP^2}(-1))=H^2(\Vv\otimes\Oo_{\PP^2}(-1)\boxtimes\Omega)=0$, from (\ref{a2}) tensored by $\Vv(-1,0)$ and (\ref{a4}) tensored by $\Vv(0,-1)$ $H^1(\Vv(-1,0))=H^1(\Vv(0,-1))=0$.\\


We have hence proved the $(BM)$-regularity and $(HW)$-regularity so $(ii)$.\\

Since $\Vv(p,p')$ is $(BM)$-regular and $(HW)$-regular for $p,p'\geq 0$, we obtain $H^i(\Vv(j_1,j_2))=0$ if $j_1\geq -i$ and $\j_2\geq -i$ for  $i>0$.

Now since $\Vv^\vee(2,2)$ is Ulrich we have $H^j(\Vv^\vee(k_1+2,k_2+2))=0$ if $k_1\geq -j$ and $k_2\geq -j$, so by Serre duality
$H^{4-j}(\Vv(-k_1-5,-k_2-5))=0$ if $k_1\geq -j$ and $k_2\geq -j$. Let $i=4-j$, $j_1=-k_1-5$, $j_2=-k_2-5$, we get $H^i(\Vv(j_1,j_2))=0$ if $j_1\leq -i-1$ and $j_2\leq -i-1$ for $i=1,2,3$. So also $(iii)$ is proved.\\

Now from $$0\to\Vv(j_1,j_2-1)\to\Vv(j_1,j_2)^{\oplus 3}\to \Vv\otimes\Oo_{\PP^2}(j_1)\boxtimes\Omega(j_2+2)\to 0$$ we get $H^i(\Vv\otimes\Oo_{\PP^2}(j_1)\boxtimes\Omega(j_2+2))=0$ if $j_1\geq -i$ and $\j_2\geq -i$.

From $$0\to\Vv\otimes\Oo_{\PP^2}(j_1)\boxtimes\Omega(j_2+1)\to\Vv(j_1,j_2)^{\oplus 3}\to\Vv(j_1,j_2+1)\to 0$$ we get $H^i(\Vv\otimes\Oo_{\PP^2}(j_1)\boxtimes\Omega(j_2+2))=0$  if $j_1\leq -i-1$ and $\j_2\leq -i-1$ for $i=1,2,3$,. In the same way we obtain $H^i(\Vv\otimes\Omega(j_1+2)\boxtimes\Oo_{\PP^2}(j_2))=0$ if $j_1\geq -i$ and $\j_2\geq -i$ or if $j_1\leq -i-1$ and $\j_2\leq -i-1$ for $i=1,2,3$, and proof of $(iv)$ is completed.
\end{proof}

\begin{remark}

\begin{multicols}{3}
[Let call for $i=0,\dots 4$
$$\textrm{$a_i=h^i(\Vv(-i-1,-i))$ and $b_i=h^i(\Vv(-i,-i-1)).$}$$
An helpful tool for summarizing the part $(iii)$ of the above Lemma are the following pictures dealing with the subsets of the plane $j_1,j_2$ whose points correspond to some intermediate cohomology group of the sheaf $\Vv(j_1,j_2)$.\\
 In the following pictures we show the vanishing regions and the positions of $a_i, b_i$ for $h^1(\Vv(j_1,j_2)), h^2(\Vv(j_1,j_2))$ and $h^3(\Vv(j_1,j_2))$:
]
\begin{center}

\centerline{
\setlength{\unitlength}{0.25cm}
\begin{picture}(20,20)(-10,-12)
\put(6,0){\vector(1,0){0.5}}
\put(5.7,0.3){\tiny$j_1$}
\put(0,6){\vector(0,1){0.5}}
\put(0.3,6){\tiny$j_2$}
\put(0,-9){\line(0,6){15}}
\put(-9,0){\line(6,0){15}}
\put(-2,-1){$.$}
\put(-3.5,-1){$a_1$}
\put(-1,-2){$.b_1$}
\put(2,2.5){$h^1=0$}
\put(-2,-2){\line(0,-1){7}}
\put(-2,-2){\line(-1,0){7}}
\put(-1,-1){\line(0,1){7}}
\put(-1,-1){\line(1,0){7}}
\put(-7,-5.5){$h^1=0$}
\put(-1.5,-11){\rm{$h^1$ cohomology}}
\end{picture}}
\columnbreak

\centerline{
\setlength{\unitlength}{0.25cm}
\begin{picture}(20,20)(-10,-12)
\put(6,0){\vector(1,0){0.5}}
\put(5.7,0.3){\tiny$j_1$}
\put(0,6){\vector(0,1){0.5}}
\put(0.3,6){\tiny$j_2$}
\put(0,-9){\line(0,6){15}}
\put(-9,0){\line(6,0){15}}
\put(-3,-2){$.$}
\put(-4.5,-2){$a_2$}
\put(-2,-3){$.b_2$}
\put(2,2.5){$h^2=0$}
\put(-3,-3){\line(0,-1){6}}
\put(-3,-3){\line(-1,0){6}}
\put(-2,-2){\line(0,1){8}}
\put(-2,-2){\line(1,0){8}}
\put(-7,-5.5){$h^2=0$}
\put(-1.5,-11){\rm{$h^2$ cohomology}}
\end{picture}}

\centerline{
\setlength{\unitlength}{0.25cm}
\begin{picture}(20,20)(-10,-12)
\put(6,0){\vector(1,0){0.5}}
\put(5.7,0.3){\tiny$j_1$}
\put(0,6){\vector(0,1){0.5}}
\put(0.3,6){\tiny$j_2$}
\put(0,-9){\line(0,6){15}}
\put(-9,0){\line(6,0){15}}
\put(-4,-3){$.$}
\put(-5.5,-3){$a_3$}
\put(-3,-4){$.b_3$}
\put(2,2.5){$h^3=0$}
\put(-4,-4){\line(0,-1){5}}
\put(-4,-4){\line(-1,0){5}}
\put(-3,-3){\line(0,1){9}}
\put(-3,-3){\line(1,0){9}}
\put(-8,-5.5){$h^3=0$}
\put(-1.5,-11){\rm{$h^3$ cohomology}}
\end{picture}}
\end{center}
\end{multicols}
\end{remark}

 \begin{lemma}
\label{fam2}
Let $\Vv$ be an Ulrich bundle on $X$, then \begin{enumerate}
\item[(a)] $h^1(\Vv\otimes\Omega\boxtimes\Oo_{\PP^2}(-1))=b_0$, $h^1(\Vv\otimes\Oo_{\PP^2}(-1)\boxtimes\Omega))=a_0$ and $h^i(\Vv\otimes\Omega\boxtimes\Oo_{\PP^2}(-1))=h^i(\Vv\otimes\Oo_{\PP^2}(-1)\boxtimes\Omega))=0$ for any $i$ except for $i=1$.
\item[(b)] $h^1(\Vv\otimes\Omega\boxtimes\Oo_{\PP^2}(-2))=b_2$, $h^1(\Vv\otimes\Oo_{\PP^2}(-2)\boxtimes\Omega)=a_2$ and $h^i(\Vv\otimes\Omega\boxtimes\Oo_{\PP^2}(-2))=h^i(\Vv\otimes\Oo_{\PP^2}(-2)\boxtimes\Omega))=0$ for any $i$ except for $i=1$.
\item[(c)] $h^1(\Vv\otimes\Omega\boxtimes\Omega)=3a_2+3b_2$ and $h^i(\Vv\otimes\Omega\boxtimes\Omega)=0$ for any $i$ except for $i=1$.
\item[(d)] $h^2(\Vv\otimes\Omega(-1)\boxtimes\Omega(-1))=3a_3+3b_3=3a_1+3b_1$ and $h^i(\Vv\otimes\Omega(-1)\boxtimes\Omega(-1))=0$ for any $i$ except for $i=2$.
\end{enumerate}
\end{lemma}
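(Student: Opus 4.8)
The plan is to reduce every assertion to the cohomology of the line--bundle twists $\Vv(j_1,j_2)$, which is completely governed by Lemma \ref{riv}. The input I will use repeatedly is that $\Vv(-j,-j)$ is acyclic for $j=1,2,3,4$, while the remaining relevant twists carry a single nonzero group: by parts (i), (iii) of Lemma \ref{riv} together with the definitions of $a_i,b_i$, the sole nonvanishing cohomology of each is the indicated natural one, namely $H^0(\Vv(0,-1))=b_0$, $H^0(\Vv(-1,0))=a_0$, $H^1(\Vv(-1,-2))=b_1$, $H^1(\Vv(-2,-1))=a_1$, $H^2(\Vv(-2,-3))=b_2$, $H^2(\Vv(-3,-2))=a_2$, $H^3(\Vv(-3,-4))=b_3$, $H^3(\Vv(-4,-3))=a_3$. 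The mechanism is to peel off each factor $\Omega$ (or $\Omega(-1)$) by tensoring a suitably twisted copy of one of \eqref{a1}--\eqref{a4} with $\Vv$ and chasing the resulting long exact sequence; since all but one term is arranged to be acyclic, each step merely transports a single cohomology group across a degree shift.

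For (a) I tensor \eqref{a4} by $\Vv(0,-1)$, giving $0\to\Vv\otimes\Omega\boxtimes\Oo_{\PP^2}(-1)\to\Vv(-1,-1)^{\oplus3}\to\Vv(0,-1)\to0$; as $\Vv(-1,-1)$ is acyclic the connecting maps yield $H^i(\Vv\otimes\Omega\boxtimes\Oo_{\PP^2}(-1))\cong H^{i-1}(\Vv(0,-1))$, so the only nonzero group is $h^1=b_0$. The bundle $\Vv\otimes\Oo_{\PP^2}(-1)\boxtimes\Omega$ is handled identically by \eqref{a2}$\otimes\Vv(-1,0)$, giving $a_0$ (or simply by exchanging the two factors of $X$, which interchanges $a_i\leftrightarrow b_i$). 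For (b) the same idea with the ``quotient'' sequences applies: \eqref{a3}$\otimes\Vv(0,-2)$ reads $0\to\Vv(-3,-2)\to\Vv(-2,-2)^{\oplus3}\to\Vv\otimes\Omega\boxtimes\Oo_{\PP^2}(-2)\to0$ with $\Vv(-2,-2)$ acyclic, so the sheaf inherits, up to a shift, the single nonzero group of $\Vv(-3,-2)$; the symmetric sequence \eqref{a1}$\otimes\Vv(-2,0)$ reduces the companion bundle to $\Vv(-2,-3)$. The two surviving numbers are precisely $a_2$ and $b_2$, which is (b).

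For (c) I first strip the $\Omega$ on the first factor by tensoring \eqref{a3} with $\Vv\otimes(\Oo_{\PP^2}\boxtimes\Omega)$, obtaining $0\to\Vv\otimes\Oo_{\PP^2}(-3)\boxtimes\Omega\to(\Vv\otimes\Oo_{\PP^2}(-2)\boxtimes\Omega)^{\oplus3}\to\Vv\otimes\Omega\boxtimes\Omega\to0$. Each of the two left--hand bundles is then evaluated exactly as in (b), using \eqref{a1}$\otimes\Vv(-3,0)$ and \eqref{a1}$\otimes\Vv(-2,0)$: one finds that $\Vv\otimes\Oo_{\PP^2}(-2)\boxtimes\Omega$ has only $H^1=b_2$ and $\Vv\otimes\Oo_{\PP^2}(-3)\boxtimes\Omega$ has only $H^2=3a_2$. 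The long exact sequence then collapses to $0\to 3b_2\to H^1(\Vv\otimes\Omega\boxtimes\Omega)\to 3a_2\to0$, and additivity of dimensions in this short exact sequence of vector spaces gives $h^1=3a_2+3b_2$ with all other cohomology zero.

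Part (d) is the crux, and I would compute $h^2(\Vv\otimes\Omega(-1)\boxtimes\Omega(-1))$ \emph{twice}, through two different nestings of twisted Euler sequences. Resolving both copies of $\Omega(-1)$ by the ``sub'' sequences (appropriate twists of \eqref{a4} and \eqref{a2}) routes the computation through $\Vv\otimes\Oo_{\PP^2}(-2)\boxtimes\Omega(-1)$ (only $H^2=a_1$) and $\Vv\otimes\Oo_{\PP^2}(-1)\boxtimes\Omega(-1)$ (only $H^1=3b_1$), and the long exact sequence isolates $0\to 3b_1\to H^2\to 3a_1\to0$, giving $h^2=3a_1+3b_1$. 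Resolving instead by the ``quotient'' sequences (twists of \eqref{a3} and \eqref{a1}) routes through $\Vv\otimes\Oo_{\PP^2}(-3)\boxtimes\Omega(-1)$ (only $H^2=b_3$) and $\Vv\otimes\Oo_{\PP^2}(-4)\boxtimes\Omega(-1)$ (only $H^3=3a_3$), yielding $h^2=3a_3+3b_3$ in the same fashion. Since both chains compute the one group $h^2(\Vv\otimes\Omega(-1)\boxtimes\Omega(-1))$, the identity $3a_3+3b_3=3a_1+3b_1$ is forced; it can alternatively be deduced from Serre duality together with the fact, used already in Lemma \ref{riv}, that $\Vv^\vee(2,2)$ is Ulrich. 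The only real difficulty is bookkeeping: for each of the two resolutions and each intermediate bundle one must choose the twist that makes one term of the Euler sequence an acyclic $\Vv(-j,-j)$ while leaving its companion with a single nonzero natural--cohomology group, so that every long exact sequence degenerates to one short exact sequence of vector spaces. Beyond keeping straight which factor $\Omega$ sits on and which twist is in force, I anticipate no genuine obstacle.
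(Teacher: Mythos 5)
Your proposal is correct and takes essentially the same route as the paper: each $\Omega$ or $\Omega(-1)$ factor is peeled off by tensoring a suitable twist of (\ref{a1})--(\ref{a4}) with $\Vv$ so that the middle term is an acyclic $\Vv(-j,-j)$, and then $\Omega\boxtimes\Omega$ and $\Omega(-1)\boxtimes\Omega(-1)$ are resolved exactly as in the paper's sequences (\ref{a5}), (\ref{a6}) and (\ref{a7}), with the identity $3a_3+3b_3=3a_1+3b_1$ obtained, just as the paper does, by computing $h^2(\Vv\otimes\Omega(-1)\boxtimes\Omega(-1))$ through both routes. The only divergence is which of $a_i$ versus $b_i$ is attached to each individual bundle in (b) and in the intermediate steps: your assignments (e.g.\ $h^1(\Vv\otimes\Omega\boxtimes\Oo_{\PP^2}(-2))=a_2$, inherited from $H^2(\Vv(-3,-2))$) are the ones forced by the direct computation, while the paper's statement and proof carry them swapped --- a slip on the paper's side that is immaterial here, since only the symmetric sums $3a_2+3b_2$ and $3a_1+3b_1=3a_3+3b_3$ enter parts (c) and (d) and the later applications.
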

\begin{proof}

 From  (\ref{a2}) tensored by $\Vv(-1,0)$ and (\ref{a4}) tensored by $\Vv(0,-1)$ we get $h^1(\Vv\otimes\Oo_{\PP^2}(-1)\boxtimes\Omega)=a_0$, $h^1(\Vv\otimes\Omega\boxtimes\Oo_{\PP^2}(-3))=b_0$ and $h^i(\Vv\otimes\Oo_{\PP^2}(-1)\boxtimes\Omega))=h^i(\Vv\otimes\Omega\boxtimes\Oo_{\PP^2}(-1))=0$ for any $i$ except for $i=1$. We have proved $(a)$.\\

 From  (\ref{a1}) tensored by $\Vv(-2,0)$ and (\ref{a3}) tensored by $\Vv(0,-2)$ we get $h^1(\Vv\otimes\Omega\boxtimes\Oo_{\PP^2}(-2))=b_2$, $h^1(\Vv\otimes\Oo_{\PP^2}(-2)\boxtimes\Omega)=a_2$ and $h^i(\Vv\otimes\Omega\boxtimes\Oo_{\PP^2}(-2))=h^i(\Vv\otimes\Oo_{\PP^2}(-2)\boxtimes\Omega)=0$ for any $i$ except for $i=1$. We have proved $(b)$.\\

 From  (\ref{a1}) tensored by $\Vv(-3,0)$ and (\ref{a3}) tensored by $\Vv(0,-3)$ we get $h^2(\Vv\otimes\Oo_{\PP^2}(-3)\boxtimes\Omega)=3b_2$, $h^2(\Vv\otimes\Omega\boxtimes\Oo_{\PP^2}(-3))=3a_2$ and $h^i(\Vv\otimes\Oo_{\PP^2}(-3)\boxtimes\Omega))=h^i(\Vv\otimes\Omega\boxtimes\Oo_{\PP^2}(-3))=0$ for any $i$ except for $i=2$.\\

 Now let us consider the sequence

 \begin{equation}\label{a5}0\to \Oo_{\PP^2}(-3)\boxtimes\Omega\to \Oo_{\PP^2}(-2)^{\oplus 3}\boxtimes\Omega\to \Omega\boxtimes\Omega\to 0,\end{equation}
tensored by $\Vv$. Since we have computed the cohomology of $\Vv\otimes\Oo_{\PP^2}(-3)\boxtimes\Omega$ and $\Vv\otimes\Oo_{\PP^2}(-2)\boxtimes\Omega$, we may deduce that $h^1(\Vv\otimes\Omega\boxtimes\Omega)=3a_2+3b_2$ and $h^i(\Vv\otimes\Omega\boxtimes\Omega)=0$ for any $i$ except for $i=1$. We have proved $(c)$.\\

From  (\ref{a1}) tensored by $\Vv(-4,-1)$ and (\ref{a3}) tensored by $\Vv(-1,-4)$ we get $h^3(\Vv\otimes\Oo_{\PP^2}(-4)\boxtimes\Omega(-1))=3b_3$, $h^3(\Vv\otimes\Omega(-1)\boxtimes\Oo_{\PP^2}(-4))=3a_3$ and $h^i(\Vv\otimes\Oo_{\PP^2}(-4)\boxtimes\Omega(-1))=h^i(\Vv\otimes\Omega(-1)\boxtimes\Oo_{\PP^2}(-4))=0$ for any $i$ except for $i=3$.\\

 From  (\ref{a1}) tensored by $\Vv(-3,-1)$ and (\ref{a3}) tensored by $\Vv(-1,-3)$ we get $h^2(\Vv\otimes\Omega(-1)\boxtimes\Oo_{\PP^2}(-3))=b_3$, $h^2(\Vv\otimes\Oo_{\PP^2}(-3)\boxtimes\Omega(-1))=a_3$ and $h^i(\Vv\otimes\Omega(-1)\boxtimes\Oo_{\PP^2}(-3))=h^i(\Vv\otimes\Oo_{\PP^2}(-3)\boxtimes\Omega(-1))=0$ for any $i$ except for $i=2$.

 Now let us consider the sequence

 \begin{equation}\label{a6}0\to \Oo_{\PP^2}(-4)\boxtimes\Omega(-1)\to \Oo_{\PP^2}(-3)^{\oplus 3}\boxtimes\Omega(-1)\to \Omega(-1)\boxtimes\Omega(-1)\to 0,\end{equation}
tensored by $\Vv$. Since we have computed the cohomology of $\Vv\otimes\Oo_{\PP^2}(-4)\boxtimes\Omega(-1)$ and $\Vv\otimes\Oo_{\PP^2}(-3)\boxtimes\Omega(-1)$, we may deduce that $h^2(\Vv\otimes\Omega\boxtimes\Omega)=3a_3+3b_3$ and $h^i(\Vv\otimes\Omega(-1)\boxtimes\Omega(-1))=0$ for any $i$ except for $i=2$.

From  (\ref{a2}) and (\ref{a4}) tensored by $\Vv(-1,-1)$   we get $h^1(\Vv\otimes\Oo_{\PP^2}(-1)\boxtimes\Omega(-1))=3b_1$, $h^1(\Vv\otimes\Omega(-1)\boxtimes\Oo_{\PP^2}(-1))=3a_1$ and $h^i(\Vv\otimes\Oo_{\PP^2}(-1)\boxtimes\Omega(-1))=h^i(\Vv\otimes\Omega(-1)\boxtimes\Oo_{\PP^2}(-1))=0$ for any $i$ except for $i=1$.\\

 From  (\ref{a2}) tensored by $\Vv(-2,-1)$ and (\ref{a4}) tensored by $\Vv(-1,-2)$ we get $h^2(\Vv\otimes\Omega(-1)\boxtimes\Oo_{\PP^2}(-2))=a_1$, $h^2(\Vv\otimes\Oo_{\PP^2}(-2)\boxtimes\Omega(-1))=b_1$ and $h^i(\Vv\otimes\Omega(-1)\boxtimes\Oo_{\PP^2}(-2))=h^i(\Vv\otimes\Oo_{\PP^2}(-2)\boxtimes\Omega(-1))=0$ for any $i$ except for $i=2$.

 Now let us consider the sequence

\begin{equation}\label{a7}0\to \Omega(-1)\boxtimes\Omega(-1)\to\Oo_{\PP^2}(-2)^{\oplus 3}\boxtimes\Omega(-1)\to \Oo_{\PP^2}(-1)\boxtimes\Omega(-1)\to 0,\end{equation}
tensored by $\Vv$. Since we have computed the cohomology of $\Vv\otimes\Oo_{\PP^2}(-2)\boxtimes\Omega(-1)$ and $\Vv\otimes\Oo_{\PP^2}(-1)\boxtimes\Omega(-1)$, we may deduce that $h^2(\Vv\otimes\Omega\boxtimes\Omega)=3a_1+3b_1$ and $h^i(\Vv\otimes\Omega(-1)\boxtimes\Omega(-1))=0$ for any $i$ except for $i=2$. So also $(d)$ is proved.

\end{proof}

\section{Families of Ulrich bundles on $\PP^2\times\PP^2$}\label{sec4}
 We start this section with examples of Ulrich bundles:

 \begin{remark}
The only rank one Ulrich bundles on $X$ are $\Oo_X(2,0)$ and $\Oo_X(0,2)$.
\end{remark}

\begin{example}
Let $a\geq 1$ and $b\geq a+2$ then the set of elements $$\Phi:\Oo_{\PP^2}^b\to\Oo_{\PP^2}(1)^a$$ such that $H^0(\Phi(1))$ are surjective forms a non-empty dense open subset (see \cite{EH} Proposition 4.1). Using the vector bundles obtained as the kernel of these maps when $b=2a$ and $a>1$ in \cite{CMP} Theorem 3.6. has been constructed families of Ulrich of even rank. The construction works also for odd rank. So for any $r>1$ we have families of rank $r$ Ulrich bundles arising from the following exact sequences.
\begin{equation}\label{e1}0\to\Oo_X(-1,1)^{\oplus r}\to\Oo_X(0,1)^{\oplus 2r}\to \Vv_1\to 0,\end{equation} or

\begin{equation}\label{e2}0\to\Oo_X(1,-1)^{\oplus r}\to\Oo_X(1,0)^{\oplus 2r}\to \Vv_2\to 0.\end{equation}
 The same families may be obtained from the exact sequences

 \begin{equation}\label{e3}0\to\Vv_1\to\Oo_X(1,0)^{\oplus 2r}\to\Oo_X(2,0)^{\oplus r}\to 0,\end{equation} or

\begin{equation}\label{e4}0\to\Vv_2\to\Oo_X(0,1)^{\oplus 2r}\to\Oo_X(0,2)^{\oplus r}\to 0.\end{equation}

The same families may be given by the exact sequences

\begin{equation}\label{e5}0\to\Oo_X(1,0)^{\oplus r}\to\Oo_{\PP^2}(1)\boxtimes\Omega(2)^{\oplus r}\to \Vv_1\to 0,\end{equation} or

\begin{equation}\label{e6}0\to\Oo_X(0,1)^{\oplus r}\to\Omega(2)\boxtimes\Oo_{\PP^2}(1)^{\oplus r}\to \Vv_2\to 0.\end{equation}
\end{example}

\begin{lemma}
\label{fam}
Let $a,b,c,d$ be integers. Let $\Vv_1$ and $\Vv_1$ be  indecomposable coherent sheaves on $X$ arising from  exact sequences

\begin{equation}\label{u1}
0\to\Oo_X(-1,1)^{\oplus d}\to\Oo_X(0,1)^{\oplus b}\to \Vv_1\to 0,\end{equation} or

\begin{equation}\label{u2}0\to\Oo_X(1,-1)^{\oplus c}\to\Oo_X(1,0)^{\oplus a}\to \Vv_2\to 0.\end{equation}
Then we have:\begin{enumerate}
\item $Ext^1(\Vv_1,\Vv_2)=Ext^1(\Vv_2,\Vv_1)=0$.
\item If $\Vv_1$ $\Vv_2$ are Ulrich bundles we must have $a=2c$ and $b=2d$.
\end{enumerate}

\end{lemma}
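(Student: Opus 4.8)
The plan is to reduce everything to the cohomology of line bundles on $X=\PP^2\times\PP^2$, which by Künneth is governed by the classical facts $H^0(\Oo_{\PP^2}(t))=0$ for $t<0$, $H^1(\Oo_{\PP^2}(t))=0$ for all $t$, and $H^2(\Oo_{\PP^2}(t))=0$ for $t>-3$. The key consequence I will exploit repeatedly is that $\Oo_X(m,n)$ is acyclic (all its cohomology vanishes) as soon as one of $m,n$ lies in $\{-1,-2\}$, since then one Künneth factor contributes nothing.

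For part (1), I would apply $\Hom(-,\Vv_2)$ to the defining sequence (\ref{u1}) of $\Vv_1$. As $\Oo_X(0,1)$ and $\Oo_X(-1,1)$ are locally free, $\Ext^i(\Oo_X(0,1),\Vv_2)=H^i(\Vv_2(0,-1))$ and $\Ext^i(\Oo_X(-1,1),\Vv_2)=H^i(\Vv_2(1,-1))$, so the resulting long exact sequence squeezes $\Ext^1(\Vv_1,\Vv_2)$ between $H^0(\Vv_2(1,-1))^{\oplus d}$ and $H^1(\Vv_2(0,-1))^{\oplus b}$. I then compute these two groups by twisting (\ref{u2}): the sheaf $\Vv_2(0,-1)$ is resolved by $\Oo_X(1,-2)^{\oplus c}$ and $\Oo_X(1,-1)^{\oplus a}$, while $\Vv_2(1,-1)$ is resolved by $\Oo_X(2,-2)^{\oplus c}$ and $\Oo_X(2,-1)^{\oplus a}$. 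Every line bundle occurring has second twist in $\{-1,-2\}$, hence is acyclic, so $H^\bullet(\Vv_2(0,-1))=H^\bullet(\Vv_2(1,-1))=0$ and therefore $\Ext^1(\Vv_1,\Vv_2)=0$. The vanishing $\Ext^1(\Vv_2,\Vv_1)=0$ is entirely symmetric: applying $\Hom(-,\Vv_1)$ to (\ref{u2}) squeezes it between $H^0(\Vv_1(-1,1))^{\oplus c}$ and $H^1(\Vv_1(-1,0))^{\oplus a}$, and twisting (\ref{u1}) shows that both $\Vv_1(-1,1)$ and $\Vv_1(-1,0)$ are resolved by line bundles whose \emph{first} twist lies in $\{-1,-2\}$, hence are acyclic.

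For part (2), I would read off the numerics directly from (\ref{u1}) and (\ref{u2}). The ranks are $\op{rank}(\Vv_1)=b-d$ and $\op{rank}(\Vv_2)=a-c$. Since $\Oo_X(-1,1)$ and $\Oo_X(1,-1)$ are acyclic, the long exact cohomology sequences of (\ref{u1}) and (\ref{u2}) give $h^0(\Vv_1)=h^0(\Oo_X(0,1)^{\oplus b})=3b$ and $h^0(\Vv_2)=h^0(\Oo_X(1,0)^{\oplus a})=3a$. As $\deg X=6$, the Ulrich condition $h^0(\Vv_i)=\deg X\cdot\op{rank}(\Vv_i)$ becomes $3b=6(b-d)$ and $3a=6(a-c)$, that is $b=2d$ and $a=2c$.

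The whole argument is bookkeeping once the reduction to line-bundle cohomology is in place; the only point requiring care is to verify, for each of the four auxiliary groups in part (1), that the chosen twist pushes one $\PP^2$-factor into the acyclic range $\{-1,-2\}$. I expect no genuine obstacle, and I note that part (1) uses neither that $\Vv_1,\Vv_2$ are Ulrich nor even that they are locally free: it needs only that they admit the stated two-term line-bundle resolutions.
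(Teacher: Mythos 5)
Your proposal is correct and takes essentially the same approach as the paper: apply $\Hom(-,\Vv_2)$ to (\ref{u1}) (and symmetrically $\Hom(-,\Vv_1)$ to (\ref{u2})) so that $\Ext^1$ is trapped between $H^0(\Vv_2(1,-1))$ and $H^1(\Vv_2(0,-1))$, then for part (2) compare $h^0(\Vv_1)=3b$ with the Ulrich requirement $h^0(\Vv_1)=6(b-d)$. The only difference is one of detail: the paper asserts the vanishings $H^0(\Vv_2(1,-1))=H^1(\Vv_2(0,-1))=0$ without comment, whereas you justify them by twisting the resolution (\ref{u2}) and invoking K\"unneth acyclicity of $\Oo_X(m,n)$ when one twist lies in $\{-1,-2\}$, which is exactly the implicit content of the paper's argument.
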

\begin{proof}$(1)$ If we apply the functor $Hom(-,\Vv_2)$ to (\ref{u1}) we obtain $Ext^1(\Vv_1,\Vv_2)=0$ because $Hom(\Oo_X(-1,1),\Vv_2)=H^0(\Vv_2(1,-1))=0$ and $Ext^1(\Oo_X(0,1),\Vv_2)=H^1(\Vv_2(0,-1))=0$. Similarly we prove that $Ext^1(\Vv_2,\Vv_1)=0$.

$(2)$ The rank of $\Vv_1$ is $b-d$ so if $\Vv_1$ is Ulrich we must have $h^0(\Vv_1)=6b-6d$. From (\ref{u1}) we get $h^0(\Vv_1)=3b$, hence $3b=6b-6d$ if and only if $b=2d$.

\end{proof}

Now we construct the full exceptional collections that we will use in the next theorems:
 Let us consider on both copies of $\PP^2$ the full exceptional collection $\{\Oo_{\PP^2}(-2), \Oo_{\PP^2}(-1), \Oo_{\PP^2}\}$. We may obtain (see \cite{Orlov}):
\begin{equation}\label{col}\begin{aligned}
&\Ee_8[k_8]=\Oo_X(-2,-2)[-4]~,~ \Ee_7[k_7]=\Oo_X(-1,-2)[-4]~,~ \Ee_6[k_6]=\Oo_X(-2,-1)[-3], \\
&\Ee_5[k_5]=\Oo_X(0, -2)[-3]~,~ \Ee_4[k_4]=\Oo_X(-2,0)[-2]~,~\Ee_3[k_3]=\Oo_X(-1,-1)[-1], \\
&\Ee_2[k_2]=\Oo_X(0,-1)[-1]~,~ \Ee_1[k_1]=\Oo_X(-1,0)~,~ \Ee_0[k_0]=\Oo_X.
\end{aligned}
\end{equation}
The associated full exceptional collection $\langle F_8=\mathcal F_n, \ldots, F_0=\mathcal F_0\rangle$ of Theorem \ref{use} is

\begin{equation}\label{cold}\begin{aligned}
&\Ff_8=\Oo_X(-1,-1)~,~ \Ff_7=\Omega(1)\boxtimes\Oo_{\PP^2}(-1)~,~ \Ff_6=\Oo_{\PP^2}(-1)\boxtimes\Omega(1), \\
&\Ff_5=\Oo_X(0,-1)~,~ \Ff_4=\Oo_X(-1,0)~,~\Ff_3=\Omega(1)\boxtimes\Omega(1), \\
&\Ff_2=\Oo_{\PP^2}\boxtimes\Omega(1)~,~ \Ff_1=\Omega(1)\boxtimes\Oo_{\PP^2}~,~ \Ff_0=\Oo_X.
\end{aligned}\end{equation}

From (\ref{col}) with a few left mutations we obtain:

\begin{equation}\label{col2}
\begin{aligned}
&\Ee_8[k_8]=\Oo_X(-2,-2)[-4]~,~ \Ee_7[k_7]=\Oo_{\PP^2}(-2)\boxtimes\Omega[-4]~,~ \Ee_6[k_6]=\Omega\boxtimes\Oo_{\PP^2}(-2)[-3], \\
&\Ee_5[k_5]=\Oo_X(-2,-1)[-3]~,~ \Ee_4[k_4]=\Oo_X(-1,-1)[-2]~,~\Ee_3[k_3]=\Oo_X(-1,-1)[-1], \\
&\Ee_2[k_2]=\Oo_X(0,-1)[-1]~,~ \Ee_1[k_1]=\Oo_X(-1,0)~,~ \Ee_0[k_0]=\Oo_X.
\end{aligned}
\end{equation}
The associated full exceptional collection $\langle F_8=\mathcal F_n, \ldots, F_0=\mathcal F_0\rangle$ of Theorem \ref{use} is

\begin{equation}\label{cold2}\begin{aligned}
&\Ff_8=\Oo_X(-1,-1)~,~ \Ff_7=\Omega(1)\boxtimes\Oo_{\PP^2}(-1)~,~ \Ff_6=\Oo_{\PP^2}(-1)\boxtimes\Omega(1), \\
&\Ff_5=\Oo_X(0,-1)~,~ \Ff_4=\Oo_X(-1,0)~,~\Ff_3=\Omega(1)\boxtimes\Omega(1), \\
&\Ff_2=\Oo_{\PP^2}\boxtimes\Omega(1)~,~ \Ff_1=\Omega(1)\boxtimes\Oo_{\PP^2}~,~ \Ff_0=\Oo_X.
\end{aligned}\end{equation}

Let call $\Gg_1=\Oo_{\PP^2}\boxtimes\Omega(1)$ and $\Gg_2=\Omega(1)\boxtimes\Oo_{\PP^2}$.

\begin{theorem}\label{volon}
Let $\Vv$ be an Ulrich bundle on $X$. Then $\Vv$ arises from an exact sequence of the form:

\begin{equation}\label{res}
0\to\Oo_X(-1,0)^{\oplus a_2}\oplus\Oo_X(0,-1)^{\oplus b_2}\to\Oo_X(-1,1)^{\oplus a_1}\oplus\Oo_X^{\oplus 3a_2+3b_2}\oplus\Oo_X(1,-1)^{\oplus b_1}\to\end{equation}
$$\to\Oo_X(1,0)^{\oplus a_0}\oplus\Oo_X(0,1)^{\oplus b_0}\to \Vv\to 0.$$
or \begin{equation}\label{resd}
0\to\Vv\to\Oo_X(1,2)^{\oplus b_4}\oplus\Oo_X(2,1)^{\oplus a_4}\to\Oo_X(1,3)^{\oplus a_3}\oplus\Oo_X(2,2)^{\oplus 3a_2+3b_2}\oplus\Oo_X(3,1)^{\oplus b_3}\to\end{equation}
$$\to\Oo_X(2,3)^{\oplus a_2}\oplus\Oo_X(3,2)^{\oplus b_2}\to 0.$$
\end{theorem}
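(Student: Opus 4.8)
The plan is to run the Beilinson-type spectral sequence of Theorem \ref{use} on $A=\Vv$ using the full exceptional collection (\ref{col}) together with its right dual (\ref{cold}), and to extract (\ref{res}) from the resulting $E_1$-page. Since the term in the slot $i=-p$ is $H^{q+k_i}(\Ee_i\otimes\Vv)\otimes\Ff_i$, the whole problem reduces to deciding, for each bundle $\Ee_i$ occurring in (\ref{col}), in which cohomological degree the group $H^\bullet(\Ee_i\otimes\Vv)$ is nonzero and what its dimension is. This is precisely the data prepared in Lemma \ref{riv} and Lemma \ref{fam2}: by Lemma \ref{riv}(iii) every pure twist $\Vv(j_1,j_2)$ with $|j_1-j_2|\le 1$ has at most one nonzero cohomology group, of dimension some $a_i$ or $b_i$, while the mixed twists $\Vv\otimes\Omega\boxtimes\Oo_{\PP^2}(-k)$, $\Vv\otimes\Oo_{\PP^2}(-k)\boxtimes\Omega$ and $\Vv\otimes\Omega\boxtimes\Omega$ are pinned down by Lemma \ref{fam2}. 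In particular the exponent $3a_2+3b_2$ of the trivial summand in (\ref{res}) is forced by the identity $h^1(\Vv\otimes\Omega\boxtimes\Omega)=3a_2+3b_2$ of Lemma \ref{fam2}(c).

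Concretely I would tabulate the nonzero entries of $E_1$. Feeding in the two Lemmas, one checks they all lie on the three anti-diagonals $p+q\in\{-2,-1,0\}$, with dimensions drawn from $a_0,b_0,a_1,b_1,a_2,b_2$ (together with the combination $3a_2+3b_2$). Because the sequence converges to $\Vv$ in total degree $0$ and to $0$ in every other total degree, and because the nonzero part already occupies a band of width three, the higher differentials are completely determined; the sequence degenerates and presents $\Vv$ as the only homology of a three-term complex $K^{-2}\to K^{-1}\to K^0$ whose entries are sums of the dual bundles $\Ff_i$, which upgrades to an exact sequence $0\to K^{-2}\to K^{-1}\to K^0\to\Vv\to 0$. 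I would then replace the dual bundles that are not line bundles, namely $\Gg_1=\Oo_{\PP^2}\boxtimes\Omega(1)$, $\Gg_2=\Omega(1)\boxtimes\Oo_{\PP^2}$, $\Omega(1)\boxtimes\Omega(1)$ and their $\Oo_{\PP^2}(\pm1)$-twists, by the resolutions coming from the Euler sequences (\ref{a1})--(\ref{a4}) and their external products (\ref{a5})--(\ref{a7}); after substituting these and cancelling the redundant trivial summands (a cancellation that uses numerical identities among the $a_i,b_i$, e.g. $a_0+b_0=2\,\mathrm{rank}(\Vv)$ obtained by comparing global sections), the surviving line bundles are exactly those listed in (\ref{res}), with the stated multiplicities and leftover trivial block $\Oo_X^{\oplus 3a_2+3b_2}$.

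For (\ref{resd}) I would not repeat the argument but dualize. The bundle $\Vv^\vee(2,2)$ is again Ulrich --- the fact already exploited in the proof of Lemma \ref{riv} --- so by the case just proved it admits a resolution of the form (\ref{res}). Applying $(\,\cdot\,)^\vee$ and twisting by $\Oo_X(2,2)$ turns that resolution into a co-resolution of $\bigl(\Vv^\vee(2,2)\bigr)^\vee(2,2)=\Vv$, and Serre duality on $X$ (with $\omega_X=\Oo_X(-3,-3)$) identifies the invariants of $\Vv^\vee(2,2)$ with those of $\Vv$ through $a_i(\Vv^\vee(2,2))=b_{4-i}(\Vv)$ and $b_i(\Vv^\vee(2,2))=a_{4-i}(\Vv)$. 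Reindexing the exponents by these identities turns the dualized sequence term by term into exactly (\ref{resd}); the same conclusion can alternatively be read off directly from the mutated collection (\ref{col2}) and its dual (\ref{cold2}).

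The main obstacle is the cancellation step of the second paragraph. One must verify that the $E_1$-page really is confined to the three anti-diagonals, so that no spurious differential survives and the complex is a genuine resolution rather than merely a monad, and then carry out the $\Omega$-to-line-bundle substitution so that every extra trivial summand cancels and leaves precisely $\Oo_X^{\oplus 3a_2+3b_2}$ in the middle. Matching these multiplicities on the nose --- rather than only up to Euler characteristic --- is the delicate point, and it is exactly what the sharp computations of Lemma \ref{fam2} are designed to supply.
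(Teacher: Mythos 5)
Your reduction of (\ref{resd}) to (\ref{res}), by applying the result to the Ulrich bundle $\Vv^\vee(2,2)$ and dualizing with $a_i(\Vv^\vee(2,2))=b_{4-i}(\Vv)$, is correct and even spells out a step the paper leaves implicit. But your argument for (\ref{res}) itself runs the Beilinson spectral sequence in the wrong direction, and this is a genuine gap, not a bookkeeping issue. You tensor $\Vv$ with the line bundles of (\ref{col}) and build the output complex from the dual bundles of (\ref{cold}). With that orientation the $E_1$-multiplicities are the nine groups $H^\bullet(\Vv(j_1,j_2))$ for the twists occurring in (\ref{col}), and two of them, $H^\bullet(\Vv(-2,0))$ and $H^\bullet(\Vv(0,-2))$, are controlled by nothing in the paper: they have $|j_1-j_2|=2$, outside the range of Lemma \ref{riv}(iii), and they do not vanish in general --- for the Ulrich bundle $\Vv_2$ of (\ref{e2}) a K\"unneth computation on the defining sequence gives $h^1(\Vv_2(0,-2))=3r\neq 0$, while for $\Vv=\Oo_X(0,2)$ one has $h^0(\Vv(0,-2))=h^0(\Oo_X)=1$. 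So the $E_1$-page is not confined to three anti-diagonals with dimensions drawn from $a_0,b_0,a_1,b_1,a_2,b_2$, and its shape is not even determined by these invariants. Worse, in your orientation the numbers $a_2$, $b_2$, hence $3a_2+3b_2$, can never arise as multiplicities: $a_2=h^2(\Vv(-3,-2))$, $b_2=h^2(\Vv(-2,-3))$ and $h^1(\Vv\otimes\Omega\boxtimes\Omega)$ are cohomologies of twists that do not occur among the $H^\bullet(\Ee_i\otimes\Vv)$, because $\Omega(1)\boxtimes\Omega(1)$ enters your spectral sequence only as an output term $\Ff_3$, never as an input; instead the trivial summand $\Ff_0=\Oo_X$ appears with multiplicity $h^0(\Vv)=6\,\mathrm{rank}(\Vv)$. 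Hence Lemma \ref{fam2}(c) cannot ``force'' the exponent of the trivial block, and the Euler-sequence substitution-and-cancellation you propose afterwards has no way to produce (\ref{res}) with the stated multiplicities.

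The paper runs the sequence the other way round, and that is precisely what makes Lemma \ref{fam2} usable: it applies the spectral sequence to $\Aa=\Vv(-1,-1)$ with the dual collection on the input side, i.e. the multiplicities are $h^\bullet(\Aa\otimes\Ff_j)$ for $\Ff_j\in\{\Oo_X(-1,-1),\Gg_2(0,-1),\Gg_1(-1,0),\Oo_X(0,-1),\Oo_X(-1,0),\Gg_1\otimes\Gg_2,\Gg_1,\Gg_2,\Oo_X\}$, while the outputs are the line bundles of (\ref{col2}). These nine input groups are exactly $h^\bullet(\Vv(-2,-2))$, $h^\bullet(\Vv\otimes\Omega\boxtimes\Oo_{\PP^2}(-2))$, $h^\bullet(\Vv\otimes\Oo_{\PP^2}(-2)\boxtimes\Omega)$, $h^\bullet(\Vv(-1,-2))$, $h^\bullet(\Vv(-2,-1))$, $h^\bullet(\Vv\otimes\Omega\boxtimes\Omega)$, $h^\bullet(\Vv\otimes\Oo_{\PP^2}(-1)\boxtimes\Omega)$, $h^\bullet(\Vv\otimes\Omega\boxtimes\Oo_{\PP^2}(-1))$, $h^\bullet(\Vv(-1,-1))$, each concentrated in a single degree with dimensions $0$, $b_2$, $a_2$, $b_1$, $a_1$, $3a_2+3b_2$, $a_0$, $b_0$, $0$ by Lemma \ref{riv} and Lemma \ref{fam2}; the output line bundles, twisted back by $(1,1)$, are term by term the entries of (\ref{res}), so the resolution comes out with the right exponents and no cancellation step at all. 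To repair your proof, swap the roles of the two collections and replace $\Vv$ by $\Vv(-1,-1)$; as written, your table is not the paper's table and does not close up to (\ref{res}).
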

\begin{proof}
We consider the Beilinson type spectral sequence associated to $\Aa:=\Vv(-1,-1)$ and identify the members of the graded sheaf associated to the induced filtration as the sheaves mentioned in the statement. We assume due to \cite[Proposition 2.1]{ES} that
$$H^i(\Aa(-j,-j))=0 \text{ for all $i$ and $0\le j \le 3$}$$
and consider the full exceptional collection $\Ee_{\bullet}$ given in (\ref{col2}) and  collection $\Ff_{\bullet}$ given in (\ref{cold2}).

 We construct a Beilinson complex, quasi-isomorphic to $\Aa$, by calculating $H^{i+k_j}(\Aa\otimes \Ff_j)\otimes \Ee_j$ with  $i,j \in \{0, \ldots, 8\}$ to get the following table. Here we use several vanishing in the intermediate cohomology of $\Aa, \Aa(-1,-1),\Aa(-2,-2),  \Aa(-3,-3)$ together with vanishing of Lemma \ref{fam2}:

\begin{center}\begin{tabular}{|c|c|c|c|c|c|c|c|c|}
\hline
$\Oo_X(-2,-2)$ & $\Oo_X(-1,-2) $& $\Oo_X(-2,-1)$ & $\Oo_X(0,-2)$ & $\Oo_X(-2,0)$ & $\Oo_X(-1,-1)$ & $\Oo_X(0,-1)$ & $\Oo_X(-1,0)$ & $\Oo_X$ \\
\hline
\hline
$0$        &$0$        &$*$         &	 $*$	 	& $*$		 	& $*$		 	& $*$			& $*$	&$*$\\
$0$        &$0$    &$0$		    &	 $0$	& $*$	& $*$		 	& $*$			& $*$	&$*$\\
$0$        &$0$    &$0$	        &	 $0$	 	& $0$	    & $*$		    & $*$			& $*$	&$*$\\
$0$        &$b_2$  &$0$	        &	 $0$	& $0$	 	& $0$		 	& $0$			& $*$	&$*$\\

$0$        &$0$        &$a_2$		    &	 $b_1$	 	& $0$	 	    & $0$		 	& $0$		& $0$	&$0$\\
$*$        &$*$        &$0$		    &	 $0$	 	& $a_1$	 	    & $0$		 	& $0$			& $0$	&$0$\\
$*$        &$*$        &$*$		    &	 $*$	 	& $0$	 	    & $3a_2+3b_2$		 	& $a_0$		& $0$	&$0$\\

$*$        &$*$        &$*$		    &	 $*$	 	& $*$		 	& $0$		 	& $0$			& $b_0$	&$0$\\
$*$        &$*$        &$*$		    &	 $*$	 	& $*$	 	& $*$		 	& $*$			& $0$	&$0$\\

\hline
\hline
$\Oo_X(-1,-1)$& $\Gg_2(0,-1)$ & $\Gg_1(-1,0)$ & $\Oo_X(0,-1)$ & $\Oo_X(-1,0)$ & $\Gg_1\otimes\Gg_2$
&$\Gg_1$ &$\Gg_2$&$\Oo_X$ \\

\hline
\end{tabular}
\end{center}
From this table, since $Ext^k(F_i,F_j)=0$ for $k>0$ and any $i,j$,  we have that the full exceptional collection (\ref{col22}) is strong. So we get the claimed resolution.
\end{proof}
\begin{remark}
From (\ref{res}) we deduce that we must have $a_0\not=0$ or $b_0\not=0$. From (\ref{resd}) we deduce that we must have $a_4\not=0$ or $b_4\not=0$.
\end{remark}
\begin{corollary}\label{volon2}
Let $\Vv$ be an Ulrich bundle on $X$ with $H^1(\Vv\otimes\Omega\boxtimes\Omega)=0$. Then $\Vv$ arises from an exact sequence of the form:

\begin{equation}\label{ww} 0\to\Oo_X(-1,1)^{\oplus a_1}\to\Oo_X(0,1)^{\oplus 2a_1}\to \Vv\to 0,\end{equation} or

\begin{equation}\label{ww2}0\to\Oo_X(1,-1)^{\oplus b_1}\to\Oo_X(1,0)^{\oplus 2b_1}\to \Vv\to 0,\end{equation}
with $a_1,b_1>1$.
\end{corollary}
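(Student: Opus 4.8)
The plan is to specialize the general resolution (\ref{res}) from Theorem \ref{volon} under the hypothesis $H^1(\Vv\otimes\Omega\boxtimes\Omega)=0$ and read off the surviving terms. By Lemma \ref{fam2}(c) we know $h^1(\Vv\otimes\Omega\boxtimes\Omega)=3a_2+3b_2$, so the vanishing hypothesis forces $a_2=b_2=0$. First I would substitute $a_2=b_2=0$ into (\ref{res}); the leftmost term $\Oo_X(-1,0)^{\oplus a_2}\oplus\Oo_X(0,-1)^{\oplus b_2}$ disappears entirely, and the middle term collapses to $\Oo_X(-1,1)^{\oplus a_1}\oplus\Oo_X(1,-1)^{\oplus b_1}$. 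Thus $\Vv$ sits in a three-term exact sequence
\begin{equation*}
0\to\Oo_X(-1,1)^{\oplus a_1}\oplus\Oo_X(1,-1)^{\oplus b_1}\to\Oo_X(1,0)^{\oplus a_0}\oplus\Oo_X(0,1)^{\oplus b_0}\to \Vv\to 0.
\end{equation*}

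The next step is to argue that only one of the two ``mixed'' summands on the left can actually occur, i.e. that the two families decouple. The point is that a nonzero map $\Oo_X(-1,1)\to\Oo_X(0,1)$ lives in $H^0(\Oo_X(1,0))$ while a nonzero map $\Oo_X(1,-1)\to\Oo_X(1,0)$ lives in $H^0(\Oo_X(0,1))$, and these correspond to the two distinct rulings; I would use the indecomposability built into Lemma \ref{fam} (and the splitting statement $\Ext^1(\Vv_1,\Vv_2)=\Ext^1(\Vv_2,\Vv_1)=0$ proved there) to conclude that an indecomposable Ulrich $\Vv$ cannot have both $a_1\neq0$ and $b_1\neq0$. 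Concretely, I expect that if both were nonzero the bundle would split as a direct sum of a $\Vv_1$-type and a $\Vv_2$-type summand, contradicting indecomposability of the Ulrich bundle. This reduces to exactly the two presentations (\ref{ww}) and (\ref{ww2}).

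Once we are in the case $b_1=0$, the sequence reads $0\to\Oo_X(-1,1)^{\oplus a_1}\to\Oo_X(0,1)^{\oplus a_0}\to\Vv\to 0$, so it remains to identify the two exponents. Here I would invoke the numerical constraint from Lemma \ref{fam}(2): the presentation (\ref{u1}) with $d=a_1$, $b=a_0$ produces an Ulrich bundle precisely when $b=2d$, forcing $a_0=2a_1$. This is exactly the shape of (\ref{ww}); symmetrically the case $a_1=0$ gives (\ref{ww2}) with $b_0=2b_1$. Finally, for the range of the parameter I would note that $\Vv$ has rank $a_0-a_1=a_1$ (respectively $b_0-b_1=b_1$), so $a_1\geq1$; the strict inequality $a_1>1$ comes from excluding the rank-one Ulrich bundles, which by the Remark preceding the Example are only $\Oo_X(2,0)$ and $\Oo_X(0,2)$ and do not arise from a genuine two-term presentation of this kind.

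The main obstacle I anticipate is the decoupling step, i.e. rigorously ruling out the mixed case with both $a_1\neq0$ and $b_1\neq0$. The resolution (\ref{res}) a priori only tells us the graded pieces of a filtration, not that the map from the left-hand term splits according to the two rulings; one must check that the connecting morphism has no components mixing the $\Oo_X(-1,1)$ and $\Oo_X(1,-1)$ summands with the ``wrong'' target, which forces careful bookkeeping of $\Hom$-spaces such as $\Hom(\Oo_X(-1,1),\Oo_X(0,1))=H^0(\Oo_X(1,0))$ versus $\Hom(\Oo_X(-1,1),\Oo_X(1,0))=H^0(\Oo_X(2,-1))=0$. The vanishing of these cross-terms, combined with the $\Ext^1$-vanishing from Lemma \ref{fam}(1), is what ultimately produces a splitting and contradicts indecomposability, but making this airtight rather than merely plausible is the delicate part of the argument.
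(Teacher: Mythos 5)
Your proof is correct, and it is essentially the paper's strategy run \emph{downstream} of Theorem \ref{volon} rather than inside the spectral sequence, which buys a genuine simplification. The paper does not substitute $a_2=b_2=0$ into (\ref{res}); it re-runs the Beilinson spectral sequence for $\Vv(-1,-1)$, names $\alpha$ and $\beta$ the two surviving differentials, and then must (i) argue that one kernel vanishes because the spectral sequence converges to a sheaf in degree zero, (ii) show that the only remaining possible differential, an element of $\Hom(\ker\alpha,\mathrm{coker}\,\beta)\cong H^0(\Bb^\vee\boxtimes\Cc)$, is zero in order to kill the other kernel, and (iii) split the resulting \emph{extension} of $\mathrm{coker}\,\alpha$ by $\mathrm{coker}\,\beta$ using the $\Ext^1$-vanishing of Lemma \ref{fam}(1), since the filtration coming from the spectral sequence only gives an extension, not a direct sum. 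In your version all three steps collapse: Theorem \ref{volon} already asserts an honest exact sequence of sheaves, not merely filtration data, so the worry in your final paragraph is vacuous; the injection in the specialized sequence is automatically block-diagonal by exactly the $\Hom$-vanishings you list ($H^0(\Oo_X(2,-1))=H^0(\Oo_X(-1,2))=0$); injectivity passes to each block, and the cokernel of a block-diagonal injection is the direct sum of the blockwise cokernels, so $\Vv$ splits with no $\Ext$ computation at all. Both arguments then finish identically with the numerics of Lemma \ref{fam}(2).

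Two caveats, neither fatal. First, like the paper's own proof, what you actually establish is that $\Vv$ is a direct sum of a bundle of type (\ref{ww}) and one of type (\ref{ww2}); the either/or dichotomy in the statement requires $\Vv$ indecomposable (or one summand zero), a hypothesis absent from the corollary but implicitly used by the paper as well --- you are at least explicit about invoking it. Second, in the notation of (\ref{res}) the middle exponent of your sequence should be $b_0$, not $a_0$ (the paper commits the mirror-image slip, writing $a_0$ for the target of $\alpha$ where its own table has $b_0$); this is harmless since Lemma \ref{fam}(2) identifies it as $2a_1$ either way. On the plus side, you justify $a_1,b_1>1$ by excluding the rank-one Ulrich bundles (most cleanly: a rank-one cokernel in (\ref{ww}) would have first Chern class $(1,1)$, while the only rank-one Ulrich bundles are $\Oo_X(2,0)$ and $\Oo_X(0,2)$), a point the paper's proof passes over in silence.
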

\begin{proof}
We consider the Beilinson type spectral sequence associated to $\Aa:=\Vv(-1,-1)$ with the full exceptional collection $\Ee_{\bullet}$ and left dual collection $\Ff_{\bullet}$ as in the above Theorem.

Since $H^1(\Vv\otimes\Omega\boxtimes\Omega)=0$ and $H^1(\Vv\otimes\Omega\boxtimes\Omega)=3a_2+3b_2$ we get $$a_2=b_2=0,$$
so we obtain the following table:

\begin{center}\begin{tabular}{|c|c|c|c|c|c|c|c|c|}
\hline
$\Oo_X(-2,-2)$ & $\Oo_X(-1,-2) $& $\Oo_X(-2,-1)$ & $\Oo_X(0,-2)$ & $\Oo_X(-2,0)$ & $\Oo_X(-1,-1)$ & $\Oo_X(0,-1)$ & $\Oo_X(-1,0)$ & $\Oo_X$ \\
\hline
\hline
$0$        &$0$        &$*$         &	 $*$	 	& $*$		 	& $*$		 	& $*$			& $*$	&$*$\\
$0$        &$0$    &$0$		    &	 $0$	& $*$	& $*$		 	& $*$			& $*$	&$*$\\
$0$        &$0$    &$0$	        &	 $0$	 	& $0$	    & $*$		    & $*$			& $*$	&$*$\\
$0$        &$0$  &$0$	        &	 $0$	& $0$	 	& $0$		 	& $0$			& $*$	&$*$\\

$0$        &$0$        &$0$		    &	 $b_1$	 	& $0$	 	    & $0$		 	& $0$		& $0$	&$0$\\
$*$        &$*$        &$0$		    &	 $0$	 	& $a_1$	 	    & $0$		 	& $0$			& $0$	&$0$\\
$*$        &$*$        &$*$		    &	 $*$	 	& $0$	 	    & $0$		 	& $a_0$		& $0$	&$0$\\

$*$        &$*$        &$*$		    &	 $*$	 	& $*$		 	& $0$		 	& $0$			& $b_0$	&$0$\\
$*$        &$*$        &$*$		    &	 $*$	 	& $*$	 	& $*$		 	& $*$			& $0$	&$0$\\

\hline
\hline
$\Oo_X(-1,-1)$& $\Gg_2(0,-1)$ & $\Gg_1(-1,0)$ & $\Oo_X(0,-1)$ & $\Oo_X(-1,0)$ & $\Gg_1\otimes\Gg_2$
&$\Gg_1$ &$\Gg_2$&$\Oo_X$ \\

\hline
\end{tabular}
\end{center}

We call $\alpha$ and $\beta$ the maps arising from the table which are defined from $\Oo_X(-2,0)^{a_1}$ to $\Oo_X(-1,0)^{a_0}$ and from $\Oo_X(0,-2)^{b_1}$ to $\Oo_X(0,-1)^{b_0}$. So we get the following exact sequence

$$0\to\ker\alpha\to\Oo_X(-2,0)^{a_1}\to \Oo_X(-1,0)^{a_0}\to coker\alpha\to 0$$
and
$$0\to\ker\beta\to\Oo_X(0,-2)^{b_1}\to \Oo_X(0,-1)^{b_0}\to coker\beta\to 0,$$
where $\ker\alpha\cong \Bb\boxtimes\Oo_{\PP^2}$ and $coker\beta\cong \Oo_{\PP^2}\boxtimes\Cc$ with $\Bb$ a vector bundle on $\PP^2$ and $\Cc$ a coherent sheaf on $\PP^2$. Moreover $\ker\beta=0$ since the spectral sequence converges to an object in degree $0$, so we obtain that $h^0(\Cc)=0$, hence $Hom(\ker\alpha,coker\beta)\cong H^0(\Bb^\vee\boxtimes\Cc)=0$. This implies that also $\ker\alpha=0$ and $\Aa$ is given by an extension of $coker\alpha$ with $coker\beta$ But, by Lemma\ref{fam}, $Ext^1(coker\beta,coker\alpha)=0$ and $a_0=2a_1$ and $b_0=2b_1$. Then we have the claimed result.
\end{proof}

\begin{theorem}\label{volon3}
Let $\Vv$ be an Ulrich bundle on $X$ with $H^2(\Vv\otimes\Omega(-1)\boxtimes\Omega(-1))=0$. Then $\Vv\cong\Oo_X(2,0)$  or $\Vv\cong\Oo_X(0,2)$.
\end{theorem}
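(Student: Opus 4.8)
The plan is to combine Lemma \ref{fam2} with an induction on the rank, peeling off one Ulrich line bundle at a time. First I would read the hypothesis through Lemma \ref{fam2}(d). That lemma already gives $h^1(\Vv\otimes\Omega(-1)\boxtimes\Omega(-1))=0$ for \emph{every} Ulrich $\Vv$, and it identifies the only surviving group as $h^2(\Vv\otimes\Omega(-1)\boxtimes\Omega(-1))=3a_1+3b_1=3a_3+3b_3$. Thus the operative content of the hypothesis is the vanishing of this $H^2$, and it forces $a_1=b_1=0$ (and $a_3=b_3=0$); by the vanishing regions recorded in the Remark after Lemma \ref{riv} this is the same as $H^1(\Vv(j_1,j_2))=0$ for all $(j_1,j_2)$.

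Next I would produce an Ulrich line sub-bundle. Feeding $a_1=b_1=0$ into the resolution (\ref{res}) of Theorem \ref{volon} and twisting it by $(-2,0)$, resp. $(0,-2)$, a short diagram chase — using that every line bundle on $X$ has vanishing $H^1$, that $H^2(\Oo_X(-3,0))=H^2(\Oo_X(0,-3))=\CC$, and that $H^2(\Oo_X(-2,-1))=H^2(\Oo_X(-1,-2))=0$ — yields $\Hom(\Oo_X(2,0),\Vv)=H^0(\Vv(-2,0))=a_2$ and $\Hom(\Oo_X(0,2),\Vv)=H^0(\Vv(0,-2))=b_2$. The same resolution gives $\mathrm{rank}\,\Vv=a_2+b_2\ge 1$, so at least one of these $\Hom$ groups is nonzero; say there is a nonzero $f\colon\Oo_X(2,0)\to\Vv$. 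Since $\Oo_X(2,0)$ and $\Vv$ are Ulrich they are semistable of the same reduced slope, and $\Oo_X(2,0)$ is stable; hence $f$ is injective with saturated image, so $\Oo_X(2,0)\hookrightarrow\Vv$ is a subbundle and the quotient $\Vv':=\Vv/\Oo_X(2,0)$ is again Ulrich — its cohomological Ulrich vanishing follows from the long exact sequence, and an Ulrich sheaf on the smooth fourfold $X$ is automatically locally free (a locally Cohen--Macaulay sheaf on a smooth variety is a bundle).

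Finally I would induct on the rank. The hypothesis descends to $\Vv'$: twisting $0\to\Oo_X(2,0)\to\Vv\to\Vv'\to 0$ by $(-2,-1)$ and by $(-1,-2)$ and using $H^1(\Vv(-2,-1))=H^1(\Vv(-1,-2))=0$ together with $H^2(\Oo_X(0,-1))=H^2(\Oo_X(1,-2))=0$ gives $a_1(\Vv')=b_1(\Vv')=0$. By induction — the base case being the Remark that the only rank-one Ulrich bundles are $\Oo_X(2,0)$ and $\Oo_X(0,2)$ — one gets $\Vv'\cong\Oo_X(2,0)^{\oplus m_1}\oplus\Oo_X(0,2)^{\oplus m_2}$. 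The extension then splits because $\Ext^1(\Vv',\Oo_X(2,0))=0$: indeed $\Ext^1(\Oo_X(2,0),\Oo_X(2,0))=H^1(\Oo_X)=0$ and $\Ext^1(\Oo_X(0,2),\Oo_X(2,0))=H^1(\Oo_X(2,-2))=0$, every line bundle on $X$ having no $H^1$. Hence $\Vv\cong\Oo_X(2,0)^{\oplus m_1}\oplus\Oo_X(0,2)^{\oplus m_2}$, and an indecomposable (in particular a rank-one) such $\Vv$ is $\Oo_X(2,0)$ or $\Oo_X(0,2)$.

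The step I expect to be the main obstacle is precisely showing that the nonzero map $f$ is a subbundle embedding with Ulrich-bundle cokernel: this is what makes the induction run, and it rests on the semistability of Ulrich bundles, on the slope/stability comparison forcing the image of $f$ to be saturated, and on the local freeness of Ulrich sheaves on the smooth $X$. I also note that, read literally, the conclusion ``$\Vv\cong\Oo_X(2,0)$ or $\Oo_X(0,2)$'' requires $\Vv$ to be indecomposable, since $\Oo_X(2,0)\oplus\Oo_X(0,2)$ satisfies the hypothesis as well; without that assumption the argument yields the direct-sum statement above.
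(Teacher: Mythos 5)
Your proposal is correct in substance, but it takes a genuinely different route from the paper's own proof. You rightly observe that the hypothesis as literally stated is vacuous — by Lemma \ref{fam2}(d) every Ulrich bundle has $h^1(\Vv\otimes\Omega(-1)\boxtimes\Omega(-1))=0$ — and that the operative hypothesis is the $H^2$-vanishing, forcing $a_1=b_1=a_3=b_3=0$; the paper does exactly the same thing, silently (its proof immediately sets $3a_3+3b_3=3a_1+3b_1=0$). From that point the two arguments diverge. The paper re-runs the Beilinson spectral sequence for $\Vv(-1,-1)$, with the roles of the collections (\ref{col2}) and (\ref{cold2}) exchanged relative to Theorem \ref{volon}: the vanishings collapse the table to the two entries $a_2$ and $b_2$ on the convergence diagonal, so $\Vv(-1,-1)$ is an extension of $\Oo_X(1,-1)^{\oplus b_2}$ and $\Oo_X(-1,1)^{\oplus a_2}$, and $\Ext^1(\Oo_X(-1,1),\Oo_X(1,-1))=\Ext^1(\Oo_X(1,-1),\Oo_X(-1,1))=0$ splits it, yielding the direct-sum decomposition in one step. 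You instead reuse the already-proved resolution (\ref{res}), extract $\Hom(\Oo_X(2,0),\Vv)$ and $\Hom(\Oo_X(0,2),\Vv)$ by a cohomology chase (your line-bundle vanishings are correct), and peel off line-bundle summands by stability theory plus induction on the rank, with the rank-one Remark as base case. What the paper's route buys is brevity and self-containedness inside its Beilinson framework; what yours buys is independence from the spectral-sequence bookkeeping, at the cost of importing facts that are standard in the Ulrich literature (Eisenbud--Schreyer, Casanellas--Hartshorne) but neither proved nor cited in this paper: semistability of Ulrich sheaves with common reduced Hilbert polynomial, Ulrichness of the quotient by a subsheaf of equal reduced Hilbert polynomial, and local freeness of Ulrich sheaves on the smooth $X$. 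Two small points to tighten: (i) ``the same resolution gives $\mathrm{rank}\,\Vv=a_2+b_2$'' is not a direct rank count, which only yields $r=a_0+b_0-2a_2-2b_2$; you must combine it with $\chi(\Vv)=h^0(\Vv)=6r$ to get $2r=a_0+b_0-a_2-b_2$ and hence $r=a_2+b_2$, or, more simply, note that $a_2=b_2=0$ would force $\Vv\cong\Oo_X(1,0)^{\oplus a_0}\oplus\Oo_X(0,1)^{\oplus b_0}$, which is not Ulrich — either way $a_2+b_2\geq 1$, which is all your argument needs; (ii) your closing caveat about indecomposability is right, and it applies equally to the paper: both proofs actually establish $\Vv\cong\Oo_X(2,0)^{\oplus m_1}\oplus\Oo_X(0,2)^{\oplus m_2}$, so the theorem's dichotomy as stated presupposes $\Vv$ indecomposable.
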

\begin{proof}
We consider the Beilinson type spectral sequence associated to $\Aa:=\Vv(-1,-1)$ and identify the members of the graded sheaf associated to the induced filtration as the sheaves mentioned in the statement. We assume due to \cite[Proposition 2.1]{ES} that
$$H^i(\Aa(-j,-j))=0 \text{ for all $i$ and $0\le j \le 3$}$$
and consider the full exceptional collection $\Ee_{\bullet}$ given in (\ref{col2}) and  collection $\Ff_{\bullet}$ given in (\ref{cold2}).

We construct a Beilinson complex, quasi-isomorphic to $\Aa$, by calculating $H^{i+k_j}(\Aa\otimes \Ee_j)\otimes \Ff_j$ with  $i,j \in \{0, \ldots, 8\}$ to get the following table. Here we use several vanishing in the intermediate cohomology of $\Aa, \Aa(-1,-1),\Aa(-2,-2),  \Aa(-3,-3)$ together with vanishing of Lemma \ref{fam2}:

\begin{center}\begin{tabular}{|c|c|c|c|c|c|c|c|c|}
\hline
$\Oo_X(-1,-1) $ & $\Oo_X(-1,0)$ & $\Oo_X(0,-1)$
& $\Oo_X(-1,1)$ & $\Oo_X(1,-1)$ & $\Gg_1\boxtimes\Gg_2$ & $\Gg_1$ & $\Gg_2$ & $\Oo_X$ \\
\hline
\hline
$0$        &$0$        &$*$         &	 $*$	 	& $*$		 	& $*$		 	& $*$			& $*$	&$*$\\
$0$        &$0$    &$0$		    &	 $0$	& $*$	& $*$		 	& $*$			& $*$	&$*$\\
$0$        &$a_3$    &$0$	        &	 $0$	 	& $0$	    & $*$		    & $*$			& $*$	&$*$\\
$0$        &$0$  &$b_3$	        &	 $a_2$	& $0$	 	& $0$		 	& $0$			& $*$	&$*$\\

$0$        &$0$        &$0$		    &	 $0$	 	& $b_2$	 	    & $0$		 	& $0$		& $0$	&$0$\\
$*$        &$*$        &$0$		    &	 $0$	 	& $0$	 	    & $0$		 	& $0$			& $0$	&$0$\\
$*$        &$*$        &$*$		    &	 $*$	 	& $0$	 	    & $0$		 	& $b_1$		& $0$	&$0$\\

$*$        &$*$        &$*$		    &	 $*$	 	& $*$		 	& $0$		 	& $0$			& $a_1$	&$0$\\
$*$        &$*$        &$*$		    &	 $*$	 	& $*$	 	& $*$		 	& $*$			& $0$	&$0$\\

\hline
\hline
$\Oo_X(-2,-2)$ & $\Gg_1(-2,-1)$ & $\Gg_2(-1,-2)$ & $\Oo_X(-2,-1)$ & $\Oo_X(-1,-1)$  & $\Oo_X(-1,-1)$ & $\Oo_X(0,-1) $ & $\Oo_X(-1,0)$ & $\Oo_X$ \\

\hline
\end{tabular}
\end{center}

Since $h^2(\Vv\otimes\Omega(-1)\boxtimes\Omega(-1))=3a_3+3b_3=3a_1+3b_1=0$ we get $$a_3=b_3=a_1=b_1=0,$$
so we obtain the following table:

\begin{center}\begin{tabular}{|c|c|c|c|c|c|c|c|c|}
\hline
$\Oo_X(-1,-1) $ & $\Oo_X(-1,0)$ & $\Oo_X(0,-1)$
& $\Oo_X(-1,1)$ & $\Oo_X(1,-1)$ & $\Gg_1\boxtimes\Gg_2$ & $\Gg_1$ & $\Gg_2$ & $\Oo_X$ \\
\hline
\hline
$0$        &$0$        &$*$         &	 $*$	 	& $*$		 	& $*$		 	& $*$			& $*$	&$*$\\
$0$        &$0$    &$0$		    &	 $0$	& $*$	& $*$		 	& $*$			& $*$	&$*$\\
$0$        &$0$    &$0$	        &	 $0$	 	& $0$	    & $*$		    & $*$			& $*$	&$*$\\
$0$        &$0$  &$0$	        &	 $a_2$	& $0$	 	& $0$		 	& $0$			& $*$	&$*$\\

$0$        &$0$        &$0$		    &	 $0$	 	& $b_2$	 	    & $0$		 	& $0$		& $0$	&$0$\\
$*$        &$*$        &$0$		    &	 $0$	 	& $0$	 	    & $0$		 	& $0$			& $0$	&$0$\\
$*$        &$*$        &$*$		    &	 $*$	 	& $0$	 	    & $0$		 	& $0$		& $0$	&$0$\\

$*$        &$*$        &$*$		    &	 $*$	 	& $*$		 	& $0$		 	& $0$			& $0$	&$0$\\
$*$        &$*$        &$*$		    &	 $*$	 	& $*$	 	& $*$		 	& $*$			& $0$	&$0$\\

\hline
\hline
$\Oo_X(-2,-2)$ & $\Gg_1(-2,-1)$ & $\Gg_2(-1,-2)$ & $\Oo_X(-2,-1)$ & $\Oo_X(-1,-1)$  & $\Oo_X(-1,-1)$ & $\Oo_X(0,-1) $ & $\Oo_X(-1,0)$ & $\Oo_X$ \\

\hline
\end{tabular}
\end{center}
Finally, since $Ext^1(\Oo_X(-1,1),\Oo_X(1,-1))=Ext^1(\Oo_X(1,-1), \Oo_X(-1,1))=0$ we get the claimed result.
\end{proof}

\begin{theorem}

Let $\Vv$ be an Ulrich bundle on $X$, then \begin{enumerate}

\item if $a_0=a_4=0$, then $\Vv\cong\Omega(2)\boxtimes\Omega(3)$ or $\Vv\cong\Oo_X(0,2)$;
\item  if $b_0=b_4=0$, then $\Vv\cong\Omega(3)\boxtimes\Omega(2)$ or $\Vv\cong\Oo_X(2,0)$;
\item  if $\Vv$ is a twist of a pullback from $\PP^2$ then $\Vv\cong\Oo_X(2,0)$, or $\Vv\cong\Oo_X(0,2)$ or $\Vv$ arises from sequences (\ref{ww}), (\ref{ww2}).

\end{enumerate}
\end{theorem}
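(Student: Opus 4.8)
The overall strategy is to play off (1) and (2) against each other and reduce them to Theorem~\ref{volon3}, and to reduce (3) to Corollary~\ref{volon2}, using two symmetries of the class of Ulrich bundles. Let $\sigma\colon X\to X$ be the involution exchanging the two factors; since $\sigma$ preserves $\Oo_X(1,1)$, the sheaf $\sigma^*\Vv$ is again Ulrich, and $\sigma^*\Oo_X(p,q)=\Oo_X(q,p)$ gives $a_i(\sigma^*\Vv)=b_i(\Vv)$ and $b_i(\sigma^*\Vv)=a_i(\Vv)$. Likewise $\Vv^\vee(2,2)$ is Ulrich, and Serre duality on $X$ (with $\omega_X=\Oo_X(-3,-3)$) gives $a_i(\Vv^\vee(2,2))=b_{4-i}(\Vv)$ and $b_i(\Vv^\vee(2,2))=a_{4-i}(\Vv)$. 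Since $\sigma^*(\Omega(2)\boxtimes\Omega(3))=\Omega(3)\boxtimes\Omega(2)$ and $\sigma^*\Oo_X(0,2)=\Oo_X(2,0)$, statement (2) is exactly (1) applied to $\sigma^*\Vv$; so it suffices to prove (1) and (3). As the $a_i,b_i$ are additive over direct sums, I will assume $\Vv$ indecomposable throughout.

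For (3), write $\Vv=\Ww\boxtimes\Oo_{\PP^2}(f)$ with $\Ww$ a bundle on $\PP^2$ (the case of the second projection follows by applying $\sigma$). By the cohomological criterion for Ulrich bundles (\cite[Proposition~2.1]{ES}) together with the Künneth formula, $\Vv$ is Ulrich iff $H^\bullet(\Vv(-k,-k))=H^\bullet(\Ww(-k))\otimes H^\bullet(\Oo_{\PP^2}(f-k))$ vanishes for $k=1,2,3,4$, i.e.\ for each such $k$ either $H^\bullet(\Ww(-k))=0$ or $f-k\in\{-1,-2\}$. On $\PP^2$ a bundle with three consecutive totally acyclic twists vanishes, while two consecutive acyclic twists $H^\bullet(\Ww(t))=H^\bullet(\Ww(t-1))=0$ force $\Ww\cong\Oo_{\PP^2}(-t-1)^{\oplus m}$ (Beilinson with window $\{-2,-1,0\}$). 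This rules out $f\geq 3$ and, since $\Vv$ has sections, leaves $f\in\{0,1,2\}$. For $f=2$ one needs $H^\bullet(\Ww(-1))=H^\bullet(\Ww(-2))=0$, hence $\Ww\cong\Oo_{\PP^2}^{\oplus m}$ and $\Vv\cong\Oo_X(0,2)$; symmetrically $f=0$ gives $\Vv\cong\Oo_X(2,0)$. For $f=1$ a one-line Künneth computation gives $a_2=h^2(\Ww(-3)\boxtimes\Oo_{\PP^2}(-1))=0$ and $b_2=h^2(\Ww(-2)\boxtimes\Oo_{\PP^2}(-2))=0$, so $H^1(\Vv\otimes\Omega\boxtimes\Omega)=3a_2+3b_2=0$ by Lemma~\ref{fam2}(c), and Corollary~\ref{volon2} shows $\Vv$ arises from (\ref{ww}) or (\ref{ww2}).

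For (1), the first step is that $a_0=0$ forces $b_1=0$. In (\ref{res}) the last term becomes $\Oo_X(0,1)^{\oplus b_0}$, and the summand $\Oo_X(1,-1)^{\oplus b_1}$ of the middle term maps to it through $\Hom(\Oo_X(1,-1),\Oo_X(0,1))=H^0(\Oo_X(-1,2))=0$; hence it lies in the kernel of the surjection onto $\Vv$, i.e.\ in the image of the injection from $\Oo_X(-1,0)^{\oplus a_2}\oplus\Oo_X(0,-1)^{\oplus b_2}$. But $\Hom(\Oo_X(1,-1),\Oo_X(-1,0))$ and $\Hom(\Oo_X(1,-1),\Oo_X(0,-1))$ both vanish, so $b_1=0$. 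Applying the $\sigma$-symmetric statement $b_0=0\Rightarrow a_1=0$ to $\Vv^\vee(2,2)$ and translating through the duality relations above yields $a_4=0\Rightarrow b_3=0$. With $b_1=b_3=0$, Lemma~\ref{fam2}(d) gives $a_1=a_3$. If $a_1=0$, then $a_1=b_1=a_3=b_3=0$, so Theorem~\ref{volon3} gives $\Vv\cong\Oo_X(2,0)$ or $\Oo_X(0,2)$; since $a_0(\Oo_X(2,0))=3\neq0$, the hypothesis $a_0=0$ leaves only $\Vv\cong\Oo_X(0,2)$.

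The remaining case $a_1=a_3\neq0$ is where the real difficulty lies, and there one must obtain $\Vv\cong\Omega(2)\boxtimes\Omega(3)$. My plan is first to prove $a_2=0$: combining the reduced form of (\ref{res}) (now with $a_0=b_1=0$) with its dual coming from (\ref{resd}), a nonzero $a_2$ should split off a rank-one direct factor incompatible with $a_1\neq0$ and the indecomposability of $\Vv$. Once $a_2=0$, the resolution collapses to $0\to\Oo_X(0,-1)^{\oplus b_2}\to\Oo_X(-1,1)^{\oplus a_1}\oplus\Oo_X^{\oplus 3b_2}\to\Oo_X(0,1)^{\oplus b_0}\to\Vv\to0$, in which the leftmost map is a twist of the second-factor Euler map (cokernel $\Oo_{\PP^2}\boxtimes\Omega(2)$); identifying the cokernels against the Euler sequences (\ref{a1})--(\ref{a4}) should exhibit $\Vv$ as a box product and pin down $a_1=3$, $b_2=1$, $b_0=9$, giving $\Vv\cong\Omega(2)\boxtimes\Omega(3)$. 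As in the proofs of Theorems~\ref{volon2} and \ref{volon3}, the delicate point is to control the monad maps so that the Beilinson filtration carries no spurious extension; establishing $a_2=0$ and the resulting box-product structure is exactly the step I expect to be the main obstacle.
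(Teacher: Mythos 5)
Your reductions and your part (3) are correct. The $\sigma$/Serre-duality bookkeeping that reduces (2) to (1) is right; the Hom-vanishing argument giving $a_0=0\Rightarrow b_1=0$ from resolution (\ref{res}) (and, dually, $a_4=0\Rightarrow b_3=0$) is valid; your K\"unneth treatment of part (3) is complete and in fact shorter than the paper's, which in the twist-one case runs a Beilinson spectral sequence on $\PP^2$ to rebuild the resolution, whereas you simply observe $a_2=b_2=0$ and quote Lemma \ref{fam2}(c) and Corollary \ref{volon2}; and the case $a_1=0$ of (1) correctly follows from Lemma \ref{fam2}(d) and Theorem \ref{volon3} (whose hypothesis, as its proof makes clear, must be read as the vanishing of $H^2(\Vv\otimes\Omega(-1)\boxtimes\Omega(-1))$, equivalently $a_1=b_1=a_3=b_3=0$). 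However, the theorem is not proved: the case $a_1=a_3\neq 0$, precisely the one whose conclusion is $\Vv\cong\Omega(2)\boxtimes\Omega(3)$, is the heart of the statement, and you leave it as an acknowledged ``plan'' resting on two claims you do not establish.

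Neither claim is routine, and the first is doubtful as stated. In the reduced resolution $0\to\Oo_X(-1,0)^{\oplus a_2}\oplus\Oo_X(0,-1)^{\oplus b_2}\to\Oo_X(-1,1)^{\oplus a_1}\oplus\Oo_X^{\oplus 3a_2+3b_2}\to\Oo_X(0,1)^{\oplus b_0}\to\Vv\to 0$ there is no Hom-vanishing that forces a splitting when $a_2\neq 0$: the summand $\Oo_X(-1,0)^{\oplus a_2}$ maps to $\Oo_X(-1,1)^{\oplus a_1}$ and to $\Oo_X^{\oplus 3a_2+3b_2}$ through $H^0(\Oo_X(0,1))$ and $H^0(\Oo_X(1,0))$, both nonzero, so the trick you used for $b_1$ has no analogue, and ``should split off a rank-one direct factor'' is unsupported. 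Even granting $a_2=0$, recognizing the collapsed complex as the box-product resolution requires controlling the differentials (that $\Oo_X(0,-1)^{\oplus b_2}\to\Oo_X^{\oplus 3b_2}$ is a direct sum of Euler maps and that the Beilinson filtration carries no extension), which is exactly what you defer. The paper avoids this structural analysis altogether: in the nontrivial case it splices the Euler-type sequences and uses Lemma \ref{fam2}(a) together with the two hypotheses (one vanishing for each direction) to produce surjections of the form $H^0\bigl(\Vv\otimes(\Omega(3)\boxtimes\Omega(2))^\vee\bigr)\onto H^2(\Vv(-3,-2))\neq 0$ and $H^0\bigl(\Vv^\vee\otimes\Omega(3)\boxtimes\Omega(2)\bigr)\onto H^2(\Vv^\vee(0,-1))\neq 0$, i.e.\ nonzero morphisms in both directions between $\Vv$ and the rigid simple bundle, and then concludes the isomorphism by the argument of \cite{bm2}; the remaining case ($a_2=0$, where the collapsed resolutions force $a_1=a_3=0$) is handled the same way with $\Oo_X(2,0)$. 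Supplying either such a ``maps both ways plus stability'' argument or genuine proofs of your two structural claims is what is missing; as it stands, the main case of (1)--(2) is open in your proposal.
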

\begin{proof} We prove only $(2)$ and $(3)$.

Since an Ulrich bundles  $\Vv$ arising from $$0\to\Oo_X(1,-1)^{\oplus b_1}\to\Oo_X(1,0)^{\oplus 2b_1}\to \Vv\to 0,$$ or $$0\to\Oo_X(-1,1)^{\oplus a_1}\to\Oo_X(0,1)^{\oplus 2a_1}\to \Vv\to 0,$$do not satisfy the condition $b_4=0$ or $b_0=0$ we may assume $H^1(\Vv\otimes\Omega\boxtimes\Omega)=3a_2+3b_2\not=0$ so $a_2=h^2(\Vv^\vee(0,-1))=h^2(\Vv(-3,-2))\not=0$ or $a_2=h^2(\Vv^\vee(0,-1))=h^2(\Vv(-3,-2))\not=0$.

If $a_2\not=0$, from the exact sequence $$0\to\Oo(-3,-2)\to\Oo(-2,-2)^{\oplus 3}\to\Omega\boxtimes\Oo(-1)^{\otimes 3}\to\Omega\boxtimes\Omega(1)\to 0$$ tensored by $\Vv$, since $h^2(\Vv(-2,-2))=0$ and $h^1(\Vv\otimes\Omega\boxtimes\Oo(-1))=b_0=0$ we get a surjection from $H^0(\Vv\otimes \Omega\boxtimes\Omega(1))=H^0(\Vv\otimes(\Omega(2)\boxtimes\Omega(3))^\vee)$ to $H^2(\Vv(-3,-2))\not=0$. Moreover from the exact sequence
 $$0\to\Oo(0,-1)\to\Oo^{\oplus 3}\to\Oo(1)\boxtimes\Omega(2)^{\otimes 3}\to\Omega(3)\boxtimes\Omega(2)\to 0$$ tensored by $\Vv^\vee$, since $h^2(\Vv^\vee)=0$ and $h^1(\Vv^\vee\otimes\Oo(1)\boxtimes\Omega(2))=h^3(\Vv\otimes\Oo(1)\boxtimes\Omega(-2))=b_4=0$ we get a surjection from $H^0(\Vv^\vee\otimes \Omega(3)\boxtimes\Omega(2))$ to $H^2(\Vv^\vee(0,-1))\not=0$.

 So we may conclude that  $\Vv\cong \Omega(3)\boxtimes\Omega(2)$ by arguing as in \cite{bm2}.

 If $a_2=0$ then (\ref{res}) becomes $$
0\to\Oo_X(0,-1)^{\oplus b_2}\to\Oo_X(-1,1)^{\oplus a_1}\oplus\Oo_X^{\oplus 3b_2}\oplus\Oo_X(1,-1)^{\oplus b_1}\to\Oo_X(1,0)^{\oplus a_0}\to \Vv\to 0,$$ and we deduce that $a_1=0$.

Similarly (\ref{resd}) becomes
$$0\to\Vv\to\Oo_X(2,1)^{\oplus a_4}\to\Oo_X(1,3)^{\oplus a_3}\oplus\Oo_X(2,2)^{\oplus 3b_2}\oplus\Oo_X(3,1)^{\oplus b_3}\to\Oo_X(3,2)^{\oplus b_2}\to 0$$ and we deduce that $a_3=0$.

Finally let assume  $b_2\not=0$ and $a_1=a_3=0$, from the exact sequence $$0\to\Oo(-2,-3)\to\Oo(-2,-2)^{\oplus 3}\to\Oo(-2,-1)^{\oplus 3}\to\Oo(-2,0)\to 0$$ tensored by $\Vv$, since $h^2(\Vv(-2,-2))=0$ and $h^1(\Vv(-2,-1))=a_1=0$ we get a surjection from $H^0(\Vv(-2,0))=H^0(\Vv\otimes(\Oo(2,0))^\vee)$ to $H^2(\Vv(-2,-3))\not=0$. Moreover from the exact sequence
 $$0\to\Oo(-1,0)\to\Oo^{\oplus 3}\to\Oo(1,0)^{\oplus 3}\to\Oo(2,0)\to 0$$ tensored by $\Vv^\vee$, since $h^2(\Vv^\vee)=0$ and $h^1(\Vv^\vee(1,0))=h^3(\Vv(-4,-3))=a_3=0$ we get a surjection from $H^0(\Vv^\vee(2,0))$ to $H^2(\Vv^\vee(-1,0))\not=0$.

 So we may conclude that  $\Vv\cong \Oo(2,0)$ (see \cite{bm2}) and $(2)$ is proven.

In order to prove $(3)$ let assume that $\Vv$ is a twist of a pullback from the second copy of $\PP^2$.

First let us consider the case $\Vv=\Oo_{\PP^2}\boxtimes\Bb$ where $\Bb$ is a vector bundle on $\PP^2$. Since $h^i(\Vv(-3))=h^i(\Vv(-4))=0$ for any $i$ but $h^2(\Oo_{\PP^2}(-3))\not=0$ and $h^2(\Oo_{\PP^2}(-4))\not=0$ we must have $h^i(\Bb(-3))=h^i(\Bb(-4))=0$ for any $i$. So we may deduce that $\Bb(-2)$ is Ulrich on $\PP^2$ hence $\Vv=\Oo_X(0,2)$.

Now let us consider the case $\Vv=\Oo_{\PP^2}(1)\boxtimes\Bb$ where $\Bb$ is a vector bundle on $\PP^2$. Since $h^i(\Vv(-1))=h^i(\Vv(-4))=0$ for any $i$ but $h^0(\Oo_{\PP^2})\not=0$ and $h^2(\Oo_{\PP^2}(-3))\not=0$  we must have $h^i(\Bb(-1))=0$ and $h^i(\Bb(-4))=0$ for any $i$. In particular $h^2(\Bb(t))=0$ for any $t\geq -4$. We consider the Beilinson type spectral sequence associated to $\Bb(-1)$
with  $\Ee_{\bullet}=\{\Oo_{\PP^2}(-1), \Omega(1), \Oo_{\PP^2}\}$  and  $\Ff_{\bullet}=\{\Oo_{\PP^2}(-2), \Oo_{\PP^2}(-1), \Oo_{\PP^2}\}$ given in (\ref{cold2}).

We get the following table.

\begin{center}\begin{tabular}{|c|c|c|}
\hline
$\Oo_{\PP^2}(-2)$ & $\Oo_{\PP^2}(-1)$ &$\Oo_{\PP^2}$ \\
\hline
\hline

 $0$		& $0$	&$0$\\

 $b$			& $a$	&$0$\\
 $0$			& $0$	&$0$\\

\hline
\hline
$\Oo_{\PP^2}(-1)$ & $\Omega(1)$ &$\Oo_{\PP^2}$ \\

\hline
\end{tabular}
\end{center}
So we obtain $$0\to \Oo_{\PP^2}(-2)^{\oplus b}\to\Oo_{\PP^2}(-1)^{\oplus a}\to\Bb(-1)\to 0,$$
hence $$0\to\Oo_X(1,-1)^{\oplus b}\to\Oo_X(1,0)^{\oplus a}\to \Vv_2\to 0.$$ Then by Lemma \ref{u1} we get  $a=2b$ and we obtain (\ref{ww2}).

Finally let us consider the case $\Vv=\Oo_{\PP^2}(t)\boxtimes\Bb$ where $\Bb$ is a vector bundle on $\PP^2$ and $t\geq 2$. Since $h^0(\Vv(-1))=0$ we must have $h^i(\Bb(-1))=0$ and we may deduce that $b_0=0$. Since $h^2(\Oo_{\PP^2}(t-4))=0$  we deduce that $b_4=0$. So by $(2)$ we obtain $\Vv=\Oo_X(2,0)$.

\end{proof}

\begin{remark} We have just proved that the Ulrich bundles obtained as a pullback from $\PP^2$ are rigid or they are in the high dimensional families  defined in \cite{CMP}. For instance the closure of the family of rank two bundles $\Mm_2$ in the associated moduli space is a generically smooth component of dimension $5$ (see \cite{CMP} Theorem 3.9.).

$\Omega(3)\boxtimes\Omega(2)$ and $\Omega(2)\boxtimes\Omega(3)$ are the only Ulrich bundles characterized so far which are not pullbacks.\\
Another cohomological characterization of $\Omega(3)\boxtimes\Omega(2)$ and $\Omega(2)\boxtimes\Omega(3)$ can be found in \cite{MM}.
\end{remark}

So far we have seen and characterized families of Ulrich bundles with $a_0=0$ or $b_0=0$. Now we construct three interesting families with both $a_0\not=0$ and $b_0\not=0$:

\begin{example}\label{quas0}
Since  $Ext^1(\Oo(2,0),\Omega(2)\boxtimes\Omega(3))\cong H^1(\Omega\boxtimes\Omega(3))\cong \mathbb C^8 $ we get a $7$-dimensional familiy of rank $5$ Ulrich bundles arising from the following extension
\begin{equation}\label{aes}
0\to\Omega(2)\boxtimes\Omega(3)\to\Vv\to\Oo(2,0)\to 0.
\end{equation}
We get $a_0=h^0(\Vv(-1,0))=3$ and $b_0=h^0(\Vv(0,-1))=9$.\\
Let us consider the dual of (\ref{aes}) tensored by $\Vv$
$$0\to\Vv(-2,0)\to\Vv^\vee\otimes\Vv\to\Omega(1)\boxtimes\Omega\otimes\Vv\to 0.$$
 From sequence (\ref{aes}) tensored by $\Oo(-2,0)$ we get $H^0(\Vv(-2,0))=0$ if the extension (\ref{aes}) is not trivial.\\
 From sequence (\ref{aes}) tensored by $\Omega(1)\boxtimes\Omega$ we get $$h^0(\Omega(1)\boxtimes\Omega\otimes\Vv)=h^0(\Omega(2)\boxtimes\Omega(3)\otimes\Omega(1)\boxtimes\Omega)=1.$$
 So we obtain $h^0(\Vv\otimes\Vv^\vee)=1$, hence $\Vv$ is simple.

 From these rank $5$ simple Ulrich bundles we may obtain higher rank Ulrich bundles by other extensions with $\Oo(2,0)$ or with $\Omega(2)\boxtimes\Omega(3)$.
\end{example}

\begin{example}\label{quas00}
Since  $Ext^1(\Omega(3)\boxtimes\Omega(2),\Omega(2)\boxtimes\Omega(3))\cong H^1(\Omega\otimes\Omega(2)\boxtimes\Omega(3)\otimes\Omega(1))\cong \mathbb C^{27} $ we get a $26$-dimensional family of rank $8$ Ulrich bundles arising from the following extension
\begin{equation}\label{aes2}
0\to\Omega(2)\boxtimes\Omega(3)\to\Vv\to\Omega(3)\boxtimes\Omega(2)\to 0.
\end{equation}
We get $a_0=h^0(\Vv(-1,0))=9$ and $b_0=h^0(\Vv(0,-1))=9$\\
Let us consider the dual of (\ref{aes2}) tensored by $\Vv$
$$0\to\Omega\boxtimes\Omega(1)\otimes\Vv\to\Vv^\vee\otimes\Vv\to\Omega(1)\boxtimes\Omega\otimes\Vv\to 0.$$
 From sequence (\ref{aes2}) tensored by $\Omega\boxtimes\Omega(1)$ we get $H^0(\Omega\boxtimes\Omega(1)\otimes\Vv)=0$ if the extension (\ref{aes2}) is not trivial.\\
 From sequence (\ref{aes2}) tensored by $\Omega(1)\boxtimes\Omega$ we get $$h^0(\Omega(1)\boxtimes\Omega\otimes\Vv)=h^0(\Omega(2)\boxtimes\Omega(3)\otimes\Omega(1)\boxtimes\Omega)=1.$$
 So we obtain $h^0(\Vv\otimes\Vv^\vee)=1$, hence $\Vv$ is simple.

 From these rank $8$ simple Ulrich bundles we may obtain higher rank Ulrich bundles by other extension with $\Omega(3)\boxtimes\Omega(2)$ or with $\Omega(2)\boxtimes\Omega(3)$.
\end{example}
Now we have a more complicated example arising from the following proposition:

\begin{proposition}\label{quas}
The generic element $\psi\in Ext^1(\Oo\boxtimes\Omega(2),\Oo(1)\boxtimes\Omega)$ gives an extension

\begin{equation}\label{ab}
0\to\Oo(1)\boxtimes\Omega\to\Ee_{\psi}\to\Oo\boxtimes\Omega(2)\to 0
\end{equation}
where $\mathcal{E}_{\psi}(1)$ is a simple Ulrich bundle of rank $4$ for which $a_0\not=0$ and $b_0\not=0$.
\end{proposition}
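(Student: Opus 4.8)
The plan is to verify three distinct assertions about $\Ee_\psi(1)$ for generic $\psi$: that it is Ulrich of rank $4$, that it is simple, and that both $a_0\neq 0$ and $b_0\neq 0$. I would begin by establishing the rank and the Ulrich property. Twisting the defining sequence \eqref{ab} by $\Oo(1,0)$ gives
$$0\to\Oo(2,0)\boxtimes\Omega\to\Ee_\psi(1)\to\Oo(1,0)\boxtimes\Omega(2)\to 0,$$
so the rank is $2+2=4$ since $\Omega$ on $\PP^2$ has rank $2$. The two outer terms $\Oo_X(2,0)\otimes(\Oo_{\PP^2}\boxtimes\Omega)$ and $\Oo_X(1,0)\otimes(\Oo_{\PP^2}\boxtimes\Omega(2))$ should themselves be checked to be Ulrich (or to have the correct intermediate cohomology and initialization); one verifies $H^i(\Oo_{\PP^2}(2)\boxtimes\Omega(j,j))=0$ for the relevant twists using the Künneth formula together with the cohomology of $\Omega(t)$ on $\PP^2$. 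Because the Ulrich property is closed under extensions (an extension of aCM sheaves with the additive numerical invariant $h^0=\deg X\cdot\mathrm{rank}$ is again Ulrich, provided it stays initialized), $\Ee_\psi(1)$ is Ulrich for every $\psi$, generic or not; this part requires no genericity.

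Next I would compute $a_0$ and $b_0$. By definition $a_0=h^0(\Ee_\psi(1)\otimes\Oo_X(-1,0))=h^0(\Ee_\psi(0,1))$ and $b_0=h^0(\Ee_\psi(1)\otimes\Oo_X(0,-1))=h^0(\Ee_\psi(1,0))$. Twisting \eqref{ab} by $\Oo_X(0,1)$ and by $\Oo_X(1,0)$ and taking the long exact sequences in cohomology, I would read off these two numbers from $h^0$ of the outer terms, namely from $H^0(\Oo_{\PP^2}(1)\boxtimes\Omega(t))$ and $H^0(\Oo_{\PP^2}\boxtimes\Omega(t+2))$ for the appropriate $t$; since $H^0(\Omega(t))\neq 0$ on $\PP^2$ exactly when $t\geq 2$, both $a_0$ and $b_0$ come out strictly positive regardless of the connecting maps, so again no genericity is needed here. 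This places $\Ee_\psi(1)$ outside all the families characterized in the earlier theorems, which is the point of the example.

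The main obstacle is simplicity, and this is where the genericity of $\psi$ enters. The strategy, mirroring Examples \ref{quas0} and \ref{quas00}, is to apply $\Hom(-,\Ee_\psi)$ (or dualize \eqref{ab} and tensor by $\Ee_\psi$) to show $h^0(\Ee_\psi^\vee\otimes\Ee_\psi)=1$. Dualizing \eqref{ab} gives
$$0\to\Oo\boxtimes\Omega(1)\to\Ee_\psi^\vee\to\Oo(-1)\boxtimes\Omega^\vee(-1)\to 0,$$
and tensoring by $\Ee_\psi$ yields a four-term filtration whose graded pieces are $\Oo\boxtimes\Omega(1)\otimes\Ee_\psi$ and $\Oo(-1)\boxtimes\Omega^\vee(-1)\otimes\Ee_\psi$. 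Using \eqref{ab} a second time on each piece, I would reduce $h^0(\Ee_\psi^\vee\otimes\Ee_\psi)$ to $h^0$ of the four ``pure box'' tensor products of $\Oo(a)\boxtimes\Omega(b)$ terms, computable by Künneth and the known cohomology of $\Omega\otimes\Omega$, $\Omega\otimes\Omega^\vee$, etc., on $\PP^2$. The identity summand contributes the $1$; the remaining contributions must be shown to vanish. Here the crucial input is that the connecting homomorphism induced by a \emph{generic} class $\psi\in\Ext^1(\Oo\boxtimes\Omega(2),\Oo(1)\boxtimes\Omega)$ kills the spurious $H^0$ sections — precisely the phenomenon recorded in the earlier examples where a nontrivial extension forces $H^0$ of a graded piece to drop to zero. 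Thus the hard step is controlling the rank of the cup-product map with $\psi$ on the relevant cohomology groups and confirming that for generic $\psi$ it is surjective (equivalently injective on the dual) on the offending pieces, leaving $h^0(\Ee_\psi^\vee\otimes\Ee_\psi)=1$ and hence $\Ee_\psi(1)$ simple.
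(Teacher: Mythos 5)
There is a genuine gap, and it sits at the very first step of your plan. You claim that $\Ee_{\psi}(1)$ is Ulrich for \emph{every} $\psi$ because the two outer terms of (\ref{ab}) are Ulrich and the Ulrich property passes to extensions. Both halves of this claim fail. The outer terms of the twisted sequence are $\Oo(2)\boxtimes\Omega(1)$ and $\Oo(1)\boxtimes\Omega(3)$, and neither is Ulrich: by K\"unneth,
$$H^1\left((\Oo(2)\boxtimes\Omega(1))(-1,-1)\right)=H^1(\Oo(1)\boxtimes\Omega)=H^0(\Oo_{\PP^2}(1))\otimes H^1(\Omega_{\PP^2})\cong\CC^3\neq 0,$$
$$H^0\left((\Oo(1)\boxtimes\Omega(3))(-1,-1)\right)=H^0(\Oo\boxtimes\Omega(2))=H^0(\Oo_{\PP^2})\otimes H^0(\Omega_{\PP^2}(2))\cong\CC^3\neq 0,$$
whereas an Ulrich bundle $\Vv$ on the fourfold $X$ must satisfy $H^i(\Vv(-j,-j))=0$ for all $i$ and $1\le j\le 4$. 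In particular the split extension $\Ee_0(1)=\Oo(2)\boxtimes\Omega(1)\oplus\Oo(1)\boxtimes\Omega(3)$ is \emph{not} Ulrich, so no argument valid for arbitrary $\psi$ can establish the Ulrich property. You have placed the genericity hypothesis in exactly the wrong spot: it is essential for the Ulrich property, while simplicity (which the paper dispatches in one line from the dual of (\ref{ab}), using only nontriviality of the extension) is the easy part.

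The missing idea is the actual content of the paper's proof. In the long exact sequence of (\ref{ab}) the two offending groups above are linked by the connecting homomorphism $\delta_{\psi}\colon H^0(\Oo\boxtimes\Omega(2))\to H^1(\Oo(1)\boxtimes\Omega)$, which is cup product with the class $\psi$; since both groups are $3$-dimensional and all other relevant K\"unneth terms vanish, $H^{\bullet}(\Ee_{\psi})=0$ precisely when $\delta_{\psi}$ is an isomorphism (the twists $H^i(\Ee_{\psi}(-j,-j))$ for $j=1,2,3$ vanish for every $\psi$, as you correctly observe one can check). The paper exhibits a $\psi$ with $\delta_{\psi}$ invertible by identifying $\Ext^1(\Oo\boxtimes\Omega(2),\Oo(1)\boxtimes\Omega)\cong W^\vee\otimes W$ with $W=\Ext^1_{\PP^2}(\Omega(2),\Omega)\cong\CC^3$, taking $\psi$ to be the identity element $I$, and showing $\delta_I$ is injective because the corresponding map $\phi\colon W\to H^0(\PP^2,\Omega(2))^\vee\otimes H^1(\PP^2,\Omega)$ sends each nonzero $\eta\in W$ to the nonzero connecting map $\delta_{\eta}$ of the associated extension on $\PP^2$; invertibility is then an open condition, giving genericity. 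Your computations that $\mathrm{rank}=4$ and that $a_0\neq 0$, $b_0\neq 0$ hold for all $\psi$ are fine, but without the connecting-map argument the proposal does not prove the central assertion that $\Ee_{\psi}(1)$ is Ulrich.
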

\begin{proof}

Let  $W=Ext^1(\Omega(2),\Omega)\cong \mathbb C^3 $. Let us notice that any nonzero $\eta\in W$ gives an exact sequence
$$0\to\Omega\to\Oo_{\mathbb P^2}^{\oplus 2}\oplus\Oo_{\mathbb P^2}(1)^{\oplus 2}\to\Omega(2)\to 0$$ and the map in cohomology
$$\delta_{\eta}: H^0(\Omega(2))\to H^1(\Omega)$$ is nonzero.

On $\mathbb P(W^\vee)\times \mathbb P^2$ we have $Ext^1(\Oo\boxtimes\Omega(2),\Oo(1)\boxtimes\Omega)\cong H^1(\Oo(1)\boxtimes(\Omega(2))^\vee)\cong H^0(\mathbb P^2, \Oo(1))\otimes H^1(\mathbb P^2,\Omega\otimes(\Omega(2))^\vee)\cong W^\vee\times W$. Now let us consider the identity element $I\in W^\vee\times W$ ($I$ restricts to $\eta$ on $\mathbb P(W^\vee)$) and we obtain

$$0\to\Oo(1)\boxtimes\Omega\to\Ee_{I}\to\Oo\boxtimes\Omega(2)\to 0.$$ In cohomology we get the map
$$H^0(\Oo\boxtimes\Omega(2))\xrightarrow{\delta_{I}} H^1(\Oo(1)\boxtimes\Omega)$$ where $$H^1(\Oo(1)\boxtimes\Omega)\cong H^0(\mathbb P^2, \Oo(1))\times H^1(\mathbb P^2,\Omega)=W^\vee\times H^1(\mathbb P^2,\Omega)$$
and  $$H^0(\Oo\boxtimes\Omega(2))\cong H^0(\mathbb P^2,\Omega(2)).$$ We may conclude that $\delta_I$ is equivalent to
$$\phi: W\to H^0(\mathbb P^2,\Omega(2))^\vee \times H^1(\mathbb P^2,\Omega).$$ Since $\phi(\eta)=\delta_\eta\not= 0$,  $\phi$ must be an inclusion. But $\dim(H^0(\mathbb P^2,\Omega(2))^\vee \times H^1(\mathbb P^2,\Omega))=\dim(W)=3$ so $\phi$ is an isomorphism.

Then we have showed that the generic element $\psi\in Ext^1(\Oo\boxtimes\Omega(2),\Oo(1)\boxtimes\Omega)$ gives an extension (\ref{ab}) with $$H^0(\Oo\boxtimes\Omega(2))\cong H^1(\Oo(1)\boxtimes\Omega).$$ So we obtain $H^0(\Ee_\psi)=H^1(\Ee_\psi)=0$. Moreover we compute $$H^i(\Ee_\psi)=H^i(\Ee_\psi(-1))=H^i(\Ee_\psi(-2))=H^i(\Ee_\psi(-2))=0$$ for any $i$. Then $\Ee_\psi(1)$ is Ulrich with both $a_0\not=0$ and $b_0\not=0$.

From the dual of (\ref{ab}) it is easy to check that $h^1(\Ee_\psi\otimes\Ee_\psi^\vee)=1$, hence $\Ee_\psi$ is simple.
\end{proof}

\section{Ulrich bundles on the flag variety $F(0,1,2)$}\label{sec5}

Let $F\subseteq\mathbb P^7$ be the del Pezzo threefold  of degree $6$ and Picard number two. Let us consider $F$ as an hyperplane section of $\mathbb P^2\times\mathbb P^2$ with the two natural projections $p_i:F\subset\mathbb P^2\times\mathbb P^2\to\mathbb P^2$ and the following rank two vector bundles:

$$
\Gg_1=p_1^*\Omega_{\mathbb P^2}^1(h_1)\qquad \Gg_2=p_2^*\Omega_{\mathbb P^2}^1(h_2),
$$

We may consider the full exceptional collection
\begin{equation}\label{col11} \{E_5=\Oo_F(-1,-1)[-2], E_4 = \Gg_2(-1,-1)[-2], E_3 = \Gg_1(-1,-1)[-1],\end{equation}
$$E_2 = \Oo_F(-1,0)[-1] , E_1 = \Oo_F(0,-1), E_0 = \Oo_F\}$$

and
\begin{equation}\label{col22} \{F_0 = \Oo_F, F_1 = \Gg_2(0,-1), F_2 = \Gg_1(-1,0), F_3 =  \Oo_F(0,-1), F_4 = \Oo_F(-1,0), F_5 = \Oo_F(-1,-1)\}
\end{equation}

\begin{theorem}\label{volon44}
Let $\Vv$ be an Ulrich bundle on $F$.  Then $\Vv$ arises from an exact sequence of the form:
\begin{equation}\label{res22}
0\to\Oo_F(0,1)^{\oplus c}\oplus\Oo_F(1,0)^{\oplus d}\to\Gg_1(1,1)^{\oplus b}\oplus\Gg_2(1,1)^{\oplus a}\to\Vv\to 0.\end{equation}

\end{theorem}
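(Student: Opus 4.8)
The plan is to run the Beilinson-type spectral sequence of Theorem \ref{use} on $F$ exactly as in the proofs of Theorem \ref{volon} and Theorem \ref{volon3}, but now with the length-six collection (\ref{col11}) and its dual collection (\ref{col22}), for which $n=5$. Concretely, I would set $\Aa:=\Vv(-1,-1)$; since $\Vv$ is Ulrich on the degree six threefold $F$ (so $\Vv^\vee(2,2)$ is again Ulrich and $\omega_F=\Oo_F(-2,-2)$ by adjunction, as $F\in|\Oo_X(1,1)|$ and $\omega_X=\Oo_X(-3,-3)$), \cite[Proposition 2.1]{ES} gives $H^i(\Aa(-j,-j))=0$ for all $i$ and $0\le j\le 3$. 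The spectral sequence then lives in the square $-5\le p\le 0$, $0\le q\le 5$, has $E_1$-term
\[
E_1^{p,q}=H^{q+k_{-p}}(\Ee_{-p}\otimes\Aa)\otimes\Ff_{-p},
\]
and converges to $\Aa$ placed in degree $0$, the columns of the resulting $6\times 6$ table being indexed by $\Ff_0,\dots,\Ff_5$.

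The core of the argument is to fill in this table. Most boxes die from the diagonal vanishing above together with Serre duality on $F$ (using $\omega_F=\Oo_F(-2,-2)$ and that $\Vv^\vee(2,2)$ is Ulrich), exactly as the $a_i,b_i$ bookkeeping worked on $\PP^2\times\PP^2$. The entries involving $\Gg_1$ and $\Gg_2$ I would reduce to line-bundle cohomology by the restricted Euler sequences
\[
0\to\Gg_1\to\Oo_F^{\oplus 3}\to\Oo_F(1,0)\to 0,\qquad 0\to\Gg_2\to\Oo_F^{\oplus 3}\to\Oo_F(0,1)\to 0,
\]
together with their twists and the associated Koszul-type resolution of $\Gg_1\otimes\Gg_2$, thereby expressing every group that appears in terms of the $H^\bullet(\Vv(j_1,j_2))$. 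The off-diagonal groups $H^\bullet(\Vv(j_1,j_2))$ with $j_1\neq j_2$ I would control by transporting the computations of Lemma \ref{riv} and Lemma \ref{fam2} from $X=\PP^2\times\PP^2$ to $F$ through the hyperplane-section sequence $0\to\Oo_X(-1,-1)\to\Oo_X\to\Oo_F\to 0$ (equivalently through the two $\PP^1$-bundle projections $p_1,p_2$).

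Once the table is computed it should carry nonzero entries on only two adjacent anti-diagonals, so the Beilinson complex of surviving terms is a three-term complex quasi-isomorphic to $\Aa$. Because the collection (\ref{col22}) is strong, i.e. $\Ext^k(\Ff_i,\Ff_j)=0$ for $k>0$, this complex is forced to be an honest short exact sequence, and reading off the surviving columns gives
\[
0\to\Oo_F(-1,0)^{\oplus c}\oplus\Oo_F(0,-1)^{\oplus d}\to\Gg_1^{\oplus b}\oplus\Gg_2^{\oplus a}\to\Aa\to 0;
\]
twisting back by $\Oo_F(1,1)$ yields the stated resolution (\ref{res22}).

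The main obstacle will be the cohomological bookkeeping of the second paragraph: unlike on $X$, the bundle $\Vv$ is defined only on $F$, so I cannot quote Lemma \ref{riv} and Lemma \ref{fam2} verbatim but must re-derive the needed vanishings intrinsically on $F$ — particularly the off-diagonal groups $H^\bullet(\Vv(j_1,j_2))$ and those twisted by $\Gg_1$, $\Gg_2$ and $\Gg_1\otimes\Gg_2$ — and then verify that the table collapses to exactly the four surviving terms with multiplicities $a,b,c,d$ and no extra boxes. Confirming that precisely those two anti-diagonals survive, so that the spectral sequence degenerates and produces a genuine short exact sequence rather than a longer complex, is the delicate point.
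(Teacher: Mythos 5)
Your skeleton coincides with the paper's own proof: the same spectral sequence of Theorem \ref{use} applied to $\Aa=\Vv(-1,-1)$ with the collections (\ref{col11}) and (\ref{col22}), and the same final appeal to the strongness of the dual collection to turn the surviving terms into the resolution (\ref{res22}). But two of your supporting inputs are defective. First, the vanishing range is wrong: on the threefold $F$ the Ulrich condition and \cite[Proposition 2.1]{ES} give $H^i(\Aa(-j,-j))=0$ only for $0\le j\le 2$, not $0\le j\le 3$; indeed $H^3(\Aa(-3,-3))=H^3(\Vv(-4,-4))\cong H^0(\Vv^\vee(2,2))^*\neq 0$ by Serre duality with $\omega_F=\Oo_F(-2,-2)$, because $\Vv^\vee(2,2)$ is again Ulrich and hence has $h^0=6\,\mathrm{rank}\,\Vv$. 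Second, the plan to transport Lemma \ref{riv} and Lemma \ref{fam2} to $F$ through $0\to\Oo_X(-1,-1)\to\Oo_X\to\Oo_F\to 0$ is a non-starter: $\Vv$ is a sheaf on $F$ only, so there is no restriction sequence relating $H^\bullet(F,\Vv(j_1,j_2))$ to cohomology computed on $X$, and the Leray sequences of the projections $p_i$ do not help since $Rp_{i*}\Vv$ is unknown. You do acknowledge this and promise an intrinsic re-derivation, but that is precisely where the content should be; note that the paper in fact needs much less than those lemmas: besides the diagonal vanishings it only establishes $H^2(\Aa(0,-1))=H^2(\Aa(-1,0))=0$, via the pulled-back Euler sequence $0\to\Aa\otimes\Gg_1(-2,-1)\to\Aa(-1,-1)^{\oplus 3}\to\Aa(0,-1)\to 0$ and its mirror.

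The decisive gap is the step you yourself flag as ``the delicate point'': the collapse of the table to the terms giving a two-term resolution. This collapse cannot be achieved by cohomological bookkeeping, however carefully done. After all the vanishings above, the $E_1$-table still contains two entries that the Ulrich hypotheses do not control, namely $e=h^2(\Aa\otimes\Gg_1(0,-1))$ and $f=h^2(\Aa\otimes\Gg_2(-1,0))$ in the paper's notation, sitting one step above $d$ and $c$. The paper does not compute them; it kills them by a convergence argument: since the spectral sequence converges to the single sheaf $\Aa$ concentrated in degree $0$, a nonzero $e$ or $f$ would contribute an uncancellable term of the Beilinson complex in a nonzero degree, so $e=f=0$. (This is the same mechanism used in Corollary \ref{volon2} to conclude $\ker\beta=0$.) Without this idea, the most your argument yields is a longer complex with an extra term $\Oo_F(-1,0)^{\oplus f}\oplus\Oo_F(0,-1)^{\oplus e}$, not the short exact sequence (\ref{res22}); supplying the convergence argument is exactly what closes your proof.
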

\begin{proof}
We consider the Beilinson type spectral sequence associated to $\Aa:=\Vv(-1,-1)$ and identify the members of the graded sheaf associated to the induced filtration as the sheaves mentioned in the statement. We  consider the full exceptional collection $\Ee_{\bullet}$ and left dual collection $\Ff_{\bullet}$ in (\ref{col11}) and (\ref{col22}).

 We construct a Beilinson complex, quasi-isomorphic to $\Aa$, by calculating $H^{i+k_j}(\Aa\otimes \Ff_j)\otimes \Ee_j$ with  $i,j \in \{0, \ldots, 6\}$ to get the following table:

 \begin{center}\begin{tabular}{|c|c|c|c|c|c|c|c|c|c|c|}
\hline
 $\Oo_F(-1,-1)$ & $\Oo_F(-1,0)$ & $\Oo_F(0,-1)$ & $\Gg_1$ & $\Gg_2$ & $\Oo_F$ \\
 \hline
 \hline
$H^3$	&	$H^3$	&	$*$		&	$*$		&	$*$		&	$*$	\\
$H^2$	&	$H^2$	&	$H^3$	&	$H^3$	&	$*$		&	$*$	\\
$H^1$	& 	$H^1$	&	$H^2$	&	$H^2$	&	$H^3$	&	$H^3$	\\
$H^0$	& 	$H^0$	&	$H^1$	&	$H^1$	&	$H^2$	&	$H^2$	\\
$*$		&	$*$ 	 	&	$H^0$	&	$H^0$	& 	$H^1$	& 	$H^1$	\\
$*$		&	$*$		&	$*$		&	$*$		&	$H^0$	& 	$H^0$ \\
\hline
$\Aa(-1,-1)$		& $\Aa\otimes\Gg_2(-1,0)$	 & $\Aa \otimes \Gg_1(0,-1)$	 & $\Aa(-1,0)$		& $\Aa(0,-1)$		&$\Aa$\\
\hline
\end{tabular}
\end{center}

We assume due to \cite[Proposition 2.1]{ES} that
$$H^i(\Aa(-j,-j))=0 \text{ for all $i$ and $0\le j \le 2$}.$$
From the exact sequence
$$0\to\Aa\otimes\Gg_1(-2,-1)\to\Aa(-1,-1)^{\oplus 3}\to\Aa(0,-1)\to 0,$$
since $H^3(\Aa\otimes\Gg_1(-2,-1))=0$ we get $H^2(\Aa(0,-1))=0$. In a similar way we get $H^2(\Aa(-1,0))=0$. So the table become
\begin{center}\begin{tabular}{|c|c|c|c|c|c|c|c|c|c|c|}
\hline
 $\Oo_F(-1,-1)$ & $\Oo_F(-1,0)$ & $\Oo_F(0,-1)$ & $\Gg_1$ & $\Gg_2$ & $\Oo_F$ \\
 \hline
 \hline
$0$	&	$0$	&	$*$		&	$*$		&	$*$		&	$*$	\\
$0$	&	$f$	&	$0$	&	$0$	&	$*$		&	$*$		\\
$0$	& 	$c$	&	$e$	&	$0$	&	$0$	&	$0$	\\
$0$	& 	$0$	&	$d$	&	$b$	&	$0$	&	$0$	\\
$*$		&	$*$		 	 	&	$0$	&	$0$	& 	$a$	& 	$0$	\\
$*$		&	$*$		&	$*$		&	$*$			&	$0$	& 	$0$ \\
\hline
$\Aa(-1,-1)$		& $\Aa\otimes\Gg_2(-1,0)$	 & $\Aa \otimes \Gg_1(0,-1)$	 & $\Aa(-1,0)$		& $\Aa(0,-1)$		&$\Aa$\\
\hline
\end{tabular}
\end{center}
Since  the spectral sequence converges to an object in degree $0$ we get $e=f=0$, so
\begin{center}\begin{tabular}{|c|c|c|c|c|c|c|c|c|c|c|}
\hline
 $\Oo_F(-1,-1)$ & $\Oo_F(-1,0)$ & $\Oo_F(0,-1)$ & $\Gg_1$ & $\Gg_2$ & $\Oo_F$ \\
 \hline
 \hline
$0$	&	$0$	&	$*$		&	$*$		&	$*$		&	$*$	\\
$0$	&	$0$	&	$0$	&	$0$	&	$*$		&	$*$		\\
$0$	& 	$c$	&	$0$	&	$0$	&	$0$	&	$0$	\\
$0$	& 	$0$	&	$d$	&	$b$	&	$0$	&	$0$	\\
$*$		&	$*$		 	&	$0$	&	$0$	& 	$a$	& 	$0$	\\
$*$		&	$*$		&	$*$		&	$*$			&	$0$	& 	$0$ \\
\hline
$\Aa(-1,-1)$		& $\Aa\otimes\Gg_2(-1,0)$	 & $\Aa \otimes \Gg_1(0,-1)$	 & $\Aa(-1,0)$		& $\Aa(0,-1)$		&$\Aa$\\
\hline
\end{tabular}
\end{center}
Finally since $Ext^i(\Oo_F(-1,0),\Oo_F(0,-1))=0$, $Ext^i(\Oo_F(-1,0),\Gg_1)=0$, $Ext^i(\Oo_F(-1,0),\Gg_2)=0$, $Ext^i(\Oo_F(0,-1),\Gg_1)=0$, $Ext^i(\Oo_F(0,-1),\Gg_2)=0$ and $Ext^i(\Gg_1,\Gg_2)=0$ for any $i>0$ we have that the full exceptional collection (\ref{col22}) is strong. So we get the claimed resolution.
\end{proof}

\begin{remark}\begin{enumerate}Let $\Vv$ an indecomposable Ulrich bundle on $F$.
 \item If $c=h^1(\Vv\otimes\Gg_2(0,1))=0$ and $b=h^1(\Vv(0,1))=0$ then $\Vv$ is the restriction of a bundle arising from sequence (\ref{e5}).\\
If $d=h^1(\Vv\otimes\Gg_1(1,0))=0$ and $a=h^1(\Vv(1,0))=0$ then $\Vv$ is the restriction of a bundle arising from sequence (\ref{e6}).
\item If $d=b=0$ we obtain the exact sequence $$0\to\Oo_F(0,1)\to\Gg_2(1,1)\to\Oo_F(2,0)\to 0$$ so $\Vv\cong\Oo_F(2,0)$.\\
If $c=a=0$ we obtain $\Vv\cong\Oo_F(0,2)$.
\end{enumerate}
\end{remark}


\providecommand{\bysame}{\leavevmode\hbox to3em{\hrulefill}\thinspace}
\providecommand{\MR}{\relax\ifhmode\unskip\space\fi MR }
\providecommand{\MRhref}[2]{%
  \href{http://www.ams.org/mathscinet-getitem?mr=#1}{#2}
}
\providecommand{\href}[2]{#2}

\end{document}